\let\wfs@comment@comment\comment
\let\comment\@undefined
\let\wfs@changes@comment\comment
\let\comment\@undefined
\newcommand\comment{%
    \ifthenelse{\equal{\@currenvir}{comment}}
    {\wfs@comment@comment}
    {\wfs@changes@comment}%
}
\newtheorem{theorem}{Theorem}[section]
\newtheorem{lemma}[theorem]{Lemma}
\newtheorem{definition}[theorem]{Definition}
\newtheorem{proposition}[theorem]{Proposition}
\newtheorem*{theorem*}{Main Theorem}
\newcommand\PG{\mathrm{PG}}
\newcommand\Q{\mathrm{Q}}
\newcommand\mO{\mathrm{O}}
\newcommand\fq{\mathbb{F}_q}
\newcommand\fp{\mathbb{F}_p}
\newcommand\ft{\mathbb{F}_3}
\newcommand\fqn{\mathbb{F}_{q^n}}
\title{On the classification of low-degree ovoids of $Q(4,q)$}
\author{Daniele Bartoli\thanks{Department of Mathematics and Informatics, University of Perugia, Perugia, Italy.
Email address: daniele.bartoli@unipg.it}\hspace{0.15 cm}  and Nicola Durante\thanks{Department of Mathematics and applications R. Caccioppoli, University of Naples Federico II, Naples, Italy.
Email address: ndurante@unina.it} }
\date{}
\begin{document}

\maketitle

%%%%%%%%%%%%%%%%%%%%%%%%%%%%%%%%%%%%%
\begin{abstract}
Ovoids of the non-degenerate quadric $Q(4,q)$ of $\PG(4,q)$ have been studied since the end of {the} '80s.
They are rare objects and, beside the classical example given by an elliptic quadric,  only three classes are known for $q$ odd, one class for $q$ even, and a sporadic example for $q=3^5.$ It is well known that to any ovoid of $Q(4,q)$ a {bivariate} polynomial $f(x,y)$ can be associated.
In this paper we classify ovoids of $Q(4,q)$ whose corresponding polynomial $f(x,y)$ has ``low degree" compared with $q$, in particular   $deg(f)<(\frac{1}{6.3}q)^{\frac{3}{13}}-1.$ 
Finally, as an application, {two classes} of permutation polynomials in characteristic $3$ are obtained.    
\end{abstract}
{\bf MSC}: 05B25; 11T06; 51E20.\\
%%%%%%%%%%%%%%%%%%%%%%%%%%%%%%%%%%%%%%%%%%%%%%%
\section{Introduction}\label{Sec:Intro}

Let  $\fqn$, $q=p^h$, be the finite field with $q^n$ elements. Denote by  $tr(x)=x+x^p+\cdots x^{p^{hn-1}}$ the {\em absolute trace} and by $N(x)=N_{\fqn/\fq}(x)=x^{\frac{q^n-1}{q-1}}$ the {\em norm} of an element $x$ of $\fqn$. For every $i\in \fq$, 
let $N_i=\{x \in \fqn|N(x) = i\}.$

 Denote by $\mathbb{P}$ a {\em finite classical polar space}, i.e. the set of absolute points of either a polarity or a non-singular quadratic form of a projective space $\PG(m,q)$.
The maximal dimensional projective subspaces contained in $\mathbb{P}$ are the {\em generators} of $\mathbb{P}$.

\begin{definition}\label{Definition}
Let $\mathbb{P}$ be a finite classical polar space of $\PG(m,q)$. An {\em ovoid} of $\mathbb{P}$ is a set of points of $\mathbb{P}$ that has exactly one point in common with each generator of $\mathbb{P}.$
An ovoid ${\mO}$ of $\mathbb{P}$ is a {\em translation} ovoid with respect to a point $P {\in}{ \mO}$ if there is a collineation group of $\mathbb{P}$  fixing $P$ linewise (i.e. stabilizing all lines through $P$) and acting sharply transitively on the points of ${\mO} \setminus \{P\}$.
\end{definition}

 Regarding ovoids of the parabolic quadric $Q(m,q)$ of $\PG(m,q)$, $m$ even, it is known that there are no ovoids if $m>6$ (see \cite{Tha81,GunMoo}).
Hence for parabolic quadrics the only open problems concern $m=4$ and $m=6$. 
For $m=6$, there are no ovoids if $q$ is a prime ($q\ne 3$) and there are two families of ovoids for $Q(6,3^h)$; see \cite{Tha81,OkeTha,BalGovSto,CamThaPay,Tit,Tha80}.

\noindent For more results on ovoids of  finite classical polar spaces see e.g. \cite{DebKleMet}.

%%%%%%%%%%%%%%%%%%%%%%%%%%%%%%%%%%%%%%%%%%%

\noindent In this paper we will focus on ovoids of $Q(4,q).$  {Denote by $(X_0,X_1,X_2,X_3,X_4)$ the homogenous coordinates of $\PG(4,q)$ and by $H_\infty$  the hyperplane at infinity  $X_0=0$.} 

\noindent Up to collineation, in $\PG(4,q)$ there is a unique non-degenerate quadric, the parabolic quadric $Q(4,q)$. In the sequel we fix the following parabolic quadric:

\[ \Q:  X_0X_4+X_1X_3+X_2^2=0.\]

Two points of $Q(4,q)$ are non-collinear if they are not contained in any generator of $Q(4,q)$. It can be easily shown that an ovoid of $Q(4,q)$ is a set of $q^2+1$ pairwise non-collinear  points  on the quadric.

We would like also to underline that ovoids of $Q(4,q)$ are related to many  relevant objects in finite projective spaces such as ovoids of  $\PG(3,q)$ (if $q$ is even) see e.g. \cite{Bal2},\cite{Tit}, semifield flocks of a quadratic cone of $\PG(3,q)$, eggs of finite projective spaces, translation generalized quadrangles and $C_F^\sigma$-sets of $\PG(2,q)$ (if $q$ is odd) (see e.g. \cite{Law},\cite{Lun},\cite{Dur19}). Also, ovoids of $Q(4,q)$ are equivalent to symplectic spreads of $\PG(3, q)$ via the isomorphism between $Q(4,q)$ and the dual of the symplectic generalized quadrangle $W(3,q)$ see \cite{Tha72}.

Since the collineation group of $Q(4,q)$  acts transitive on pairs non-collinear points,  we can always assume that an ovoid $\mO_4$ of $Q(4,q)$ contains the points $(1,0,0,0,0)$ and $(0,0,0,0,1)$ and hence it 
can be written in the following form:
\begin{equation}\label{Eq:Param}
\mO_4(f) = \{(1,x,y,f(x,y),-y^2-xf(x,y)\}_{x,y\in \fq} \cup \{(0,0,0,0,1)\}, \end{equation}

\noindent for some function $f:\fq^2 \longrightarrow \fq$ with $f(0,0) =0$.

\noindent  {The set $\mO_4(f)$ is an ovoid if and only if }
\begin{equation}\label{Eq:Intro2} \langle (1,x_1,y_1,f(x_1,y_1),-y_1^2-x_1f(x_1,y_1), (1,x_2,y_2,f(x_2,y_2),-y_2^2-x_2f(x_2,y_2)\rangle \ne 0,    
\end{equation}
 
\noindent for every $(x_1,y_1) \ne (x_2,y_2)$ in $\mathbb{F}_q^2$, where $\langle  \cdot,\cdot \rangle$ is the symmetric for $q$ odd (or alternating for $q$ even) form associated to the quadratic form of the quadric $Q(4,q)$.
%Note that  
%$$\langle (1,x_1,y_1,f(x_1,y_1),-y_1^2-x_1f(x_1,y_1), (0,0,0,0,1)\rangle =1.$$

Note that  Condition \eqref{Eq:Intro2} reads

\begin{equation}\label{Eq:intro}
(y_1-y_2)^2 + (x_1-x_2)(f(x_1,y_1)-f(x_2,y_2)) \ne 0,\end{equation}
for every   $(x_1,y_1) \ne (x_2,y_2)$ in $\mathbb{F}_q^2$.

The only known classification theorems  for ovoids of $Q(4,q)$ are the following.

\begin{theorem}\cite{BalGovSto}
If ${\mO}$ is an ovoid of $Q(4,q)$, $q$ odd prime,  then ${\mO}$ is an elliptic quadric. 
\end{theorem}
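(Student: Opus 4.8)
The plan is to show that the associated function $f$ must be linear, $f(x,y)=\alpha x+\beta y$ with $\beta^2-4\alpha$ a non-square, since this is exactly the parametrisation of an elliptic quadric: the affine points then satisfy the linear relation $X_3=\alpha X_1+\beta X_2$, so $\mO$ lies in a hyperplane $H$ and is the anisotropic section $\Q\cap H$. I would read condition \eqref{Eq:intro} through two elementary specialisations. Taking $y_1=y_2=y$ and $x_1\ne x_2$ forces $f(x_1,y)\ne f(x_2,y)$, so for every fixed $y$ the slice $g_y\colon x\mapsto f(x,y)$ is a permutation of $\fq$. Taking a fixed base point $(x_0,y_0)$ and reading \eqref{Eq:intro} in the remaining variables shows that the affine plane curve
\[ C_{x_0,y_0}:\ (Y-y_0)^2+(X-x_0)\bigl(f(X,Y)-f(x_0,y_0)\bigr)=0 \]
passes through $(x_0,y_0)$ and has \emph{no other} $\fq$-rational point. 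These are the two facts I would exploit.

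Granting for the moment that $f$ is affine in $x$ for each fixed $y$, say $f(x,y)=a(y)x+b(y)$ with $a(y)\ne0$ (forced by the permutation property) and $b(0)=0$, the endgame is clean and shows where primality enters. Substituting into \eqref{Eq:intro}, writing $u=x_1$, $v=x_2$, $A_i=a(y_i)$, $C=b(y_1)-b(y_2)$, the condition becomes $(u-v)\bigl(A_1u-A_2v+C\bigr)+(y_1-y_2)^2\ne0$ for all $u,v\in\fq$ whenever $y_1\ne y_2$. If $A_1\ne A_2$ the linear forms $u-v$ and $A_1u-A_2v$ are independent, so after a linear change of coordinates the left-hand side becomes $s(t+C)+(y_1-y_2)^2$, which is surjective onto $\fq$ and hence attains $0$, a contradiction. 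Thus $a(y)\equiv\alpha$ is constant, and with $s=u-v$ the condition reduces to $\alpha s^2+Cs+(y_1-y_2)^2\ne0$ for all $s\in\fq$, i.e.\ the discriminant $\bigl(b(y_1)-b(y_2)\bigr)^2-4\alpha(y_1-y_2)^2$ is a non-square. Dividing by the square $(y_1-y_2)^2$, every slope $\delta=\frac{b(y_1)-b(y_2)}{y_1-y_2}$ of the graph of $b$ lies in $S=\{\delta\in\fq:\ \delta^2-4\alpha\ \text{is a non-square}\}$, and a standard character-sum count gives $|S|\le\frac{q+1}{2}<\frac{q+3}{2}$. Since $q=p$ is prime, R\'edei's theorem forces a function whose graph determines fewer than $\frac{p+3}{2}$ directions to be affine; hence $b(y)=\beta y$, $f(x,y)=\alpha x+\beta y$, and the surviving requirement is precisely that $\beta^2-4\alpha$ be a non-square.

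The single assumption I made, that each slice $g_y$ is affine, is the genuine obstacle and the technical heart of the argument. I would attack it through the curves $C_{x_0,y_0}$, which carry a \emph{unique} $\fq$-point: this datum is controlled on one hand by the Hasse--Weil bound, since an absolutely irreducible $\fq$-component of degree $d$ produces about $q$ rational points, impossible once $q$ is large compared with $\deg f$; and on the other hand by the theory of directions and R\'edei polynomials applied to the permutation slices $g_y$ together with the mixed relation $(x_1-x_2)\bigl(g_{y_1}(x_1)-g_{y_2}(x_2)\bigr)\ne-(y_1-y_2)^2$. For $q=p$ prime and \emph{arbitrary} degree, Hasse--Weil is useless (the slices may have degree close to $q$), and it is the prime-field direction results (R\'edei, Lov\'asz--Schrijver, Blokhuis et al.) that pin the slices down to affine maps. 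This is exactly the step in which the hypothesis $q$ prime cannot be dropped, and exactly the step that the present paper instead handles, in the low-degree range, by Hasse--Weil / St\"ohr--Voloch estimates valid for all $q$ at the cost of a bound on $\deg f$.
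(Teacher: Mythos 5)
A preliminary remark: the paper does not prove this statement at all --- it is quoted from \cite{BalGovSto} --- so there is no internal proof to compare yours against; your proposal has to be judged as a reconstruction of the Ball--Govaerts--Storme theorem. As such it contains a genuine gap, and it is exactly the one you flag yourself: everything rests on the unproven claim that for each fixed $y$ the slice $g_y\colon x\mapsto f(x,y)$ is an affine map. Your endgame from that point on is correct: the independence of the linear forms $u-v$ and $A_1u-A_2v$ does force $a(y)$ to be constant, the discriminant computation does confine every difference quotient of $b$ to $S=\{\delta\in\fq : \delta^2-4\alpha \text{ is a non-square}\}$, the bound $|S|\le (q+1)/2$ is right, and for $q=p$ prime the R\'edei--Megyesi/Lov\'asz--Schrijver direction theorem then makes $b$ linear. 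But none of this touches the real difficulty; the reduction you ``grant for the moment'' is essentially the whole theorem.

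The tools you name cannot close that gap in the form you invoke them. Restricted to a single slice ($y_1=y_2=y$), condition \eqref{Eq:intro} says only that $g_y$ is injective, i.e.\ that the graph of $g_y$ misses the direction $0$; a permutation of $\fp$ may determine as many as $p-1$ directions, so no R\'edei-type theorem applies to $g_y$ unless one first proves a bound of the shape $<(p+3)/2$ on its direction set, and nothing in your proposal extracts such a bound from the cross-slice conditions $(x_1-x_2)\bigl(g_{y_1}(x_1)-g_{y_2}(x_2)\bigr)\ne -(y_1-y_2)^2$, which constrain pairs of \emph{distinct} slices and exclude a single value rather than a set of directions. Note also that slice-affineness is simply false without primality: the Kantor ovoid $f(x,y)=-nx^{\sigma}$ over $\fq$, $q=p^h$, $h>1$, has non-affine slices, so any proof of this step must use $q=p$ in an essential way, which your sketch nowhere does. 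For comparison, the actual proof in \cite{BalGovSto} takes a different, global route: it first establishes, by polynomial (coefficient) techniques, that every ovoid of $Q(4,q)$ meets every three-dimensional elliptic quadric section in $1 \bmod p$ points, and then, for $q=p$ prime, deduces from this intersection property that the ovoid is an elliptic quadric; it never reduces the problem to linearity of the slices of $f$.
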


\begin{theorem}\cite{Bal3,Lav06}\label{thm14}
 Let $f(X,Y)=g(X)+h(Y)$, with both $g$ and $h$ $\mathbb{F}_{q^{\prime}}$-linear maps, $q=(q^{\prime})^n$, $q$ odd.
 If $q^{\prime}\geq 4n^2-8n+2$ or $q^{\prime}$ is a prime and 
 $q^{\prime} > 2n^2 -(4-2\sqrt{3})n + (3-2\sqrt{3})$, then  ${\mO}_4(f)$ is either an elliptic quadric or a Kantor ovoid.
\end{theorem}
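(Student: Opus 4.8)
The plan is to recast the ovoid property as the non-existence of nontrivial $\mathbb{F}_q$-rational points on an explicit curve and then to extract rigidity from the Hasse--Weil bound. Because $g$ and $h$ are additive, setting $u=x_1-x_2$, $v=y_1-y_2$ turns \eqref{Eq:intro} into the single requirement
\[
F(u,v):=v^2+u\,g(u)+u\,h(v)\neq 0\qquad\text{for every }(u,v)\in\mathbb{F}_q^2\setminus\{(0,0)\}.
\]
Thus $\mO_4(f)$ is an ovoid exactly when the plane curve $\mathcal{C}\colon F=0$ has no affine $\mathbb{F}_q$-point off the origin; note that on the line $u=0$ one gets $v^2=0$, so the origin is automatically the only zero there and the work is concentrated on $u\neq 0$. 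The case $n=1$ is immediate: then $g,h$ are $\mathbb{F}_q$-linear, $F=a\,u^2+c\,uv+v^2$ is a binary quadratic form, and having no nontrivial zero is equivalent to $c^2-4a$ being a non-square, which gives the elliptic quadric. So from now on $n\ge 2$, and after normalising $f$ by the stabiliser of $(1,0,0,0,0)$ and $(0,0,0,0,1)$ I may assume the leading coefficients are in convenient position.

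The difficulty is that $\deg F$ can be as large as $1+q/q'$, so a direct application of Hasse--Weil over $\mathbb{F}_q$ is worthless. The point is to convert the $\mathbb{F}_{q'}$-linearity into a genuine drop in genus. For $h\equiv 0$, dividing by the square $u^2$ shows that the condition is that $-g(u)/u=-\sum_i a_i u^{q'^{i}-1}$ be a non-square for every $u\in\mathbb{F}_q^{*}$; crucially every exponent $q'^{i}-1$ is divisible by $q'-1$, which is exactly the arithmetic that allows a descent from a high-genus superelliptic curve over $\mathbb{F}_q$ to an auxiliary curve $\mathcal{X}$ defined over $\mathbb{F}_{q'}$ whose genus is linear in $n$ and whose $\mathbb{F}_{q'}$-points govern the nontrivial zeros of $F$. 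Applying the Hasse--Weil--Serre estimate over $\mathbb{F}_{q'}$, if $\mathcal{X}$ were absolutely irreducible it would carry at least $q'+1-(n-1)\lfloor 2\sqrt{q'}\rfloor$ rational points, and a short count of the points at infinity and over the origin shows this exceeds the admissible number as soon as $q'\ge 4n^2-8n+2$, producing a forbidden zero of $F$. When $q'$ is prime there are no intermediate subfields, so the Galois-conjugate components genuinely pair up and one may work with a conjugate pair over $\mathbb{F}_{q'^2}$, which is what sharpens the constant to the stated prime bound.

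Consequently $\mathcal{X}$ --- and with it $\mathcal{C}$ --- has no absolutely irreducible component defined over the base field, so all its components are permuted nontrivially by Frobenius. The heart of the argument, and the step I expect to be the main obstacle, is to show that this Frobenius-rigidity is incompatible with the shape $F=v^2+u\,g(u)+u\,h(v)$ unless $(g,h)$ is of monomial type. I would analyse $F$ through its homogenisation, its tangent cone at the origin and its intersection with the line at infinity: the term $v^2$ fixes the local behaviour, while a Frobenius-stable factorisation carrying no rational component forces the coefficient patterns of $g$ and of $h$ to be extremely symmetric. Pushing this through should first eliminate the mixed term, giving $h\equiv 0$, and then collapse $g$ to a single monomial $g(u)=a\,u^{q'^{k}}$, the exponent being constrained so that $-u\,g(u)$ is a non-square for all $u\neq 0$. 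These are precisely the parameters of the elliptic quadric ($k=0$) and of the Kantor ovoid ($k\neq 0$), which completes the classification.
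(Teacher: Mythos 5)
First, a point of order: the paper you were given does not prove this statement at all --- Theorem \ref{thm14} is quoted from \cite{Bal3,Lav06} --- so your attempt has to be measured against the proofs in those references rather than against anything in this text. Several of your moves are correct and show good instincts: the reduction of \eqref{Eq:intro}, via additivity of $g$ and $h$, to the condition $F(u,v)=v^2+u\,g(u)+u\,h(v)\neq 0$ for $(u,v)\neq(0,0)$; the disposal of the case $n=1$ as a binary quadratic form; the recognition that Hasse--Weil over $\mathbb{F}_q$ is useless because $\deg F$ can be of order $q/q^{\prime}$; and even the numerology of the bound, since $q^{\prime}\geq 4n^2-8n+2$ is essentially the threshold at which $q^{\prime}+1-2(n-1)\sqrt{q^{\prime}}$ exceeds a small constant, i.e.\ a Hasse--Weil count for a curve of genus $n-1$ over $\mathbb{F}_{q^{\prime}}$; a curve of exactly this kind is the engine of the argument in \cite{Bal3}.

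The problem is that the two steps that would turn this outline into a proof are asserted rather than carried out, and each one is the real content of the theorem. (1) The ``descent'' is never constructed. Divisibility of the exponents $(q^{\prime})^{i}-1$ by $q^{\prime}-1$ does not by itself yield a curve $\mathcal{X}$ over $\mathbb{F}_{q^{\prime}}$ of genus $O(n)$ whose $\mathbb{F}_{q^{\prime}}$-points govern the zeros of $F$: the condition involves the quadratic character of $\mathbb{F}_q$, and in \cite{Bal3} it is transported to $\mathbb{F}_{q^{\prime}}$ only after the problem has been converted, through the translation-ovoid/semifield-flock correspondence, into the statement that an $\mathbb{F}_{q^{\prime}}$-linear set of rank $n$ consisting of internal points of a conic of $\PG(2,q)$ must lie on a line; the descent then comes from restricting that condition to rank-two sub-linear sets (sublines of order $q^{\prime}$) and composing with the norm $N_{\mathbb{F}_q/\mathbb{F}_{q^{\prime}}}$, which produces a polynomial of degree $2n$ over $\mathbb{F}_{q^{\prime}}$ that must be a nonsquare everywhere --- that is the hyperelliptic curve of genus at most $n-1$. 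No substitute for this machinery appears in your proposal. (2) You never deal with the mixed term $u\,h(v)$: your descent sketch explicitly assumes $h\equiv 0$, and the elimination of $h$, together with the collapse of $g$ to a single $q^{\prime}$-monomial, is deferred to a ``Frobenius-rigidity'' claim that you yourself flag as the main obstacle and for which you offer only intentions. In the cited proofs this is not a separate rigidity phenomenon: $h$ is absorbed at the outset by passing to the adjoint map (this is what turns the two-variable ovoid condition into a one-variable nonsquare/discriminant condition), and the monomial conclusion is exactly the statement that the linear set lies on a line, extracted from the degenerate case in which the norm polynomial is a constant times a square on every subline. Note also that any such rigidity step must be delicate rather than soft: the Thas--Payne and Penttila--Williams functions are of the form $g(X)+h(Y)$ with $g,h$ $\mathbb{F}_3$-linear and define genuine ovoids that are neither elliptic nor Kantor, so no argument using only consequences of the ovoid condition that those examples share can force the monomial shape; the hypothesis relating $q^{\prime}$ and $n$ must be re-engaged quantitatively. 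Finally, your explanation of the prime case (pairing Galois-conjugate components over $\mathbb{F}_{q^{\prime 2}}$) does not produce the constant $2n^2-(4-2\sqrt{3})n+(3-2\sqrt{3})$; the improvement in \cite{Lav06} comes from a different analysis of sublines of prime order contained in the internal points of a conic.
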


Also, since ovoids of $Q(4,q)$ with $f$ as in Theorem \ref{thm14} are equivalent to rank $2$ commutative semifields the following also is known.

\begin{theorem}\cite{LavRod}
Let $f(X,Y)=g(X)+h(Y)$, with both $g$ and $h$ $\mathbb{F}_{q^{\prime}}$-linear maps, $q=(q^{\prime})^n$, $q$ odd. If either $n\le 4$ or $(q^{\prime},n)=(3,5)$, then $O_4(f)$ is either an elliptic quadric or a Kantor ovoid or a Thas-Payne ovoid or a Penttila-Williams ovoid.
\end{theorem}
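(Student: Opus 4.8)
The plan is to recast the statement as a classification of commutative presemifields and then deal with the small dimensions by structure theory and the remaining ones by a provably exhaustive computation. First I would exploit the additive shape of $f$. Writing $u=x_1-x_2$ and $v=y_1-y_2$ and using that $g,h$ are additive, Condition \eqref{Eq:intro} becomes
\begin{equation*}
Q(u,v):=v^2+u\,g(u)+u\,h(v)\neq 0\qquad\text{for all }(u,v)\in\fq^2\setminus\{(0,0)\}.
\end{equation*}
Since $g$ and $h$ are $\mathbb{F}_{q'}$-linear, $Q$ is an $\fq$-valued $\mathbb{F}_{q'}$-quadratic form on the $2n$-dimensional $\mathbb{F}_{q'}$-space $\fq^2$, and the ovoid condition says exactly that $Q$ is anisotropic (no nonzero zero). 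In particular $O_4(f)$ is a translation ovoid, and by the standard dictionary recalled just before the statement this anisotropic datum is equivalent, up to the appropriate isotopy, to an $\mathbb{F}_{q'}$-linear commutative presemifield of rank two and order $q^2=(q')^{2n}$. Thus the theorem reduces to classifying such presemifields according to the dimension $2n$ over $\mathbb{F}_{q'}$.

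Next I would isolate the invariants that survive isotopy, namely the center and the nuclei of the associated semifield, and use them to constrain the structure. By commutativity the left and right nuclei coincide, and the nucleus containments force $2n$ to factor in restricted ways, which already settles the smallest cases. For $n=1$ the form $Q$ is binary and anisotropic over $\mathbb{F}_{q'}$, which forces the field and hence the elliptic quadric. For $n=2,3$ (dimensions $4$ and $6$) I would combine these dimension constraints with the known classification of semifields of the corresponding orders, leaving only the field and the Kantor--Knuth family, i.e. the elliptic quadric and the Kantor ovoid.

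The hard part is $n=4$ (dimension $8$) together with the sporadic $(q',n)=(3,5)$ case (order $3^{10}$), where no short structural argument is available. Here I would organize an exhaustive, isotopy-reduced enumeration: parametrise the multiplication by structure constants over $\mathbb{F}_{q'}$ subject to the anisotropy of $Q$, quotient by the isotopy and Knuth-orbit action to cut the search down to inequivalent classes, and verify by computer that the only surviving presemifields are the four named ones (field, Kantor--Knuth, Thas--Payne, Penttila--Williams). The main obstacle is precisely to make this search simultaneously feasible and provably complete: the number of candidate structure constants explodes with the dimension, so the crux is to use the nucleus and center invariants together with normal forms to collapse the orbit space enough that the enumeration terminates while still covering every isotopy class. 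Finally I would translate each surviving presemifield back through the dictionary to its ovoid of $Q(4,q)$, obtaining exactly the stated four possibilities.
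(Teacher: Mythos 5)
The first thing to say is that the paper offers no proof of this statement: it is quoted as a known theorem from \cite{LavRod}, and the sentence preceding it (``since ovoids of $Q(4,q)$ with $f$ as in Theorem \ref{thm14} are equivalent to rank $2$ commutative semifields\dots'') records exactly the dictionary your proposal begins with. So there is no internal argument to compare against; the fair comparison is with the cited literature, and there your plan is not a different route --- it \emph{is} the route. Your reduction of \eqref{Eq:intro}, via additivity, to the anisotropy of $Q(u,v)=v^2+u\,g(u)+u\,h(v)$ on the $2n$-dimensional $\mathbb{F}_{q'}$-space $\fq^2$ is correct, the identification with rank two commutative presemifields up to isotopy is the standard one, and the split ``structure theory in low dimension, isotopy-reduced exhaustive computation in dimension $8$ and for order $3^{10}$'' is precisely how the classification was actually obtained.

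The gaps appear when the proposal is read as a proof rather than as a program. First, the decisive cases $n=4$ and $(q',n)=(3,5)$ are not proved: they are deferred to a computer search whose feasibility and certified completeness you yourself call ``the crux''. Designing the normal forms and isotopy invariants that collapse the orbit space enough for an exhaustive, provably complete enumeration is the entire content of \cite{LavRod}; asserting that one would carry this out does not establish the theorem. Second, for $n=2,3$ you appeal to ``the known classification of semifields of the corresponding orders'', but no such general classification exists --- semifields of orders $(q')^4$ and $(q')^6$ are not classified. What exists, and what must be invoked, are classifications of the restricted class at hand: Theorem \ref{thm14} already settles $n=2$ for every odd $q'$ (since $4n^2-8n+2=2$ there), while $n=3$ and the sporadic $(3,5)$ case require the dedicated results on rank two commutative semifields and semifield flocks in those dimensions (e.g.\ \cite{Bal3,Lav06}), not general semifield theory. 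Third, the claim that ``nucleus containments force $2n$ to factor in restricted ways, which already settles the smallest cases'' is too vague to verify: the rank-two condition (the middle nucleus containing $\mathbb{F}_q$) is an assumption built into the dictionary, not a consequence of it, and by itself it eliminates nothing in dimensions $4$ and $6$.
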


{In Section \ref{Sec:AlgebraicVarieties}, using \eqref{Eq:intro}, a hypersurface $\mathcal{S}_f$ of $\PG(4,q)$ will be attached to $\mO_4(f)$. A similar connection between ovoids in $\PG(2,q)$ (hyperovals), $q$ even, and algebraic curves over finite fields  was used in \cite{CS2015,HM2012} to classify hyperovals corresponding to $o$-polynomials of small degree. Investigating the existence of absolutely irreducible components defined over $\mathbb{F}_q$ in  $\mathcal{S}_f$, we will provide the following classification result, which is also the main achievement of our paper.
\begin{theorem*}
Let $f(X,Y)=\sum_{ij}a_{i,j}X^iY^j$, with $a_{0,1}=0$ if $p>2$. Suppose that $q>6.3 (d+1)^{13/3}$, where $\deg(f)=d$. If $\mO_4(f)$ is an ovoid of $\Q$ then one of the following holds:
\begin{enumerate}
    \item $p=2$ and $f(X,Y)=a_{1,0}X+a_{0,1}Y$.  Hence $O_4(f)$ is an elliptic quadric.
    \item $p>2$ and $f(X,Y)=a_{p^j,0}X^{p^j}$, $j\geq 0$.  Hence $O_4(f)$ is either an elliptic quadric or a Kantor ovoid.
\end{enumerate}
\end{theorem*}}

The only known ovoids of $Q(4,q)$ are listed in  Table \ref{Table1}, where  $n$ is a non-square of $\fq$, $q=p^h$ odd,   $\sigma\ne 1 $ is an automorphism of $\fq$, and
$a\ne 1$ is an element with $tr(a)=1$ of $\fq$, $q$ even. All but  the last two examples of ovoids in  {Table \ref{Table1}} are translation ovoids of $Q(4,q)$ with respect to their point at infinity.

\begin{center}
\begin{table}[ht]
\caption{Known ovoids of $Q(4,q)$.}\label{Table1}
\vspace{0.25cm}
\begin{center}
 \begin{tabular}{||c c c c||} 
\hline
Name  &  $f(x,y)$ &  Restrictions &Ref. \\ [0.5ex] 
 \hline\hline
{\em Elliptic quadric} & $-nx$ & $q$ odd & \\ 
 \hline
{\em Elliptic quadric} & $ax+y$ & $q$ even  &\\ 
 \hline
{\em Kantor } & $-nx^\sigma$ &  $q$ odd, $h>1$ & \cite{Kantor} \\
 %\hline
%{\em Kantor} II  & $-n x^\sigma$  & $\sigma^2 \ne 1$, $q$ odd, $h>1$.   \\
 \hline
{\em Penttila-Williams}  & $-x^9-y^{81}$ & $p=3$, $h=5$  & \cite{PenWil}\\
 \hline
{\em Thas-Payne}  & $-nx-(n^{-1} x)^{1/9} - y^{1/3}$ & $p=3, h>2$  & \cite{ThaPay} \\ 
\hline
{\em Ree-Tits slice}  & $-x^{2\sigma+3}-y^\sigma$ & $p=3, h>1$, $h$ odd, $\sigma=\sqrt{3q} $& \cite{Kantor}\\
 \hline
{\em Tits} & $x^{\sigma+1}+ y^\sigma$ & $p=2, h>1$, $h$ odd, $\sigma = \sqrt{2q}$ & \cite{Tit}\\ 
\hline
\end{tabular}
\end{center}
\end{table}
\end{center}

Note that the Ree-Tits slice is obtained from slicing the Ree-Tits ovoid of $Q(6, q)$; see  \cite{Kantor}.

In the sequel we briefly discuss the connection between these examples and   Theorem \ref{Th:Main2}.

In the even characteristic case,  Main Theorem  does not apply to the Tits ovoid
$$f_2(x,y)=x^{2^{h+1}+1}+ y^{2^{h+1}}, \qquad q=2^{2h+1},$$
since $\deg(f_2)\simeq  \sqrt{q}$.

In the odd characteristic case, more examples of ovoids are known. Apart from the case $f_3(x,y)=-nx^{p^r}$, $q=p^h$, (Elliptic quadric and Kantor)  which satisfies Condition 2 in Main Theorem, the two infinite families (Ree-Tits slice and Thas-Payne in characteristic $3$)  arise from $f_4(x,y)=-x^{3^{2\cdot 3^{h+1}+3}}-y^{3^{h+1}}$, when $q=3^{2h+1}$, and $f_5(x,y)=-nx-(n^{-1}x)^{1/9}-y^{1/3}$, $q=3^h$, $h>2$. Note that in these cases, Theorem \ref{Th:Main2}  does not apply to such examples, since $\deg(f_4)\simeq  \sqrt{q}$ and $\deg(f_5)>\sqrt[3]{q}$.

Finally, observe that for the Penttila-Williams sporadic example in $\mathbb{F}_{3^5}$ arising from $f_6(x,y)=-x^9-y^{81}$ the hypothesis of Main Theorem does not hold  since $\deg(f_6)$ is too high with respect to $q=3^5$.

 {This paper is organized as follows.
 %In Section \ref{semifield_flocks}  first a one-to-one correspondence between translation ovoids of $Q(4,q)$ and semifield flocks of the quadratic cone of  $\PG(3,q)$, $q$ odd, is described.  
 % Then a one-to-one correspondence between the known examples of semifield flocks whose duals span the plane and components of $C_F^\sigma$-sets is achieved.  
 %A one-to-one correspondence between ovoids of $Q(4,q)$ and ovoids of $\PG(3,q)$, $q$ even, is presented in Section \ref{Sec:q_even}.
 The main result is proved in Section \ref{Sec:AlgebraicVarieties}. Finally, in  Section \ref{Sec:permPol} two families of permutation polynomials connected with  Pentilla-William and Thas-Payne ovoids are obtained.}

\section{Proof of the main theorem}\label{Sec:AlgebraicVarieties}
The aim of this section is to provide a partial classification of the ovoids of $Q(4,q)$ via the investigation of specific algebraic varieties; see \cite{BartoliSurvey} for a survey on links between algebraic varieties over finite fields and relevant combinatorial objects. 

An algebraic hypersurface $\mathcal{S}$ is an algebraic variety that may be defined by a single implicit equation. An algebraic hypersurface defined over a field $\mathbb{K}$  is \emph{absolutely irreducible}  if the associated polynomial is irreducible over every algebraic extension of $\mathbb{K}$. An absolutely irreducible $\mathbb{K}$-rational component of a hypersurface $\mathcal{S}$, defined by the polynomial $F$, is simply an absolutely irreducible hypersurface such that the associated polynomial has coefficients in $\mathbb{K}$ and it is a factor of $F$.   As in the previous sections, we will make use of the homogeneous equations for algebraic varieties. For a deeper introduction to algebraic varieties we refer the interested reader to \cite{Hartshorne}.

As mentioned in Section \ref{Sec:Intro} an ovoid of $\Q:  X_0X_4+X_1X_3+X_2^2=0$ can be written in the  form $\mO_4(f) = \{(1,x,y,f(x,y),-y^2-xf(x,y)\}_{x,y\in \fq} \cup \{(0,0,0,0,1)\},$
for some function $f$ with $f(0,0) =0$ and $(y_1-y_2)^2 + (x_1-x_2)(f(x_1,y_1)-f(x_2,y_2)) \ne 0$ for every   $(x_1,y_1) \ne (x_2,y_2)$ in $\mathbb{F}_q^2$. Let $\widetilde f(X,Y,T)$ be the homogenization of $f(X,Y)$ and let $d=\deg(f)$. In order to obtain non-existence results and partial classifications for the ovoids of $\Q$, we will consider the hypersurface  $\mathcal{S}_f\subset \mathrm{PG}(4,q)$ defined by $F(X_0,X_1,X_2,X_3,X_4)=0$, where 
\begin{equation}\label{Eq:F}
F(X_0,X_1,X_2,X_3,X_4)= (X_2 -X_4)^2X_0^{d-1} + (X_1-X_3) \Big(\widetilde f(X_1,X_2,X_0)-\widetilde f(X_3,X_4,X_0)\Big).
%F(X_0,X_1,X_2,X_3,X_4)= (X_2 -X_4)^2X_0^{d-1} + (X_1-X_3) \left[\sum_{i+j=0}^{d} a_{ij}\left(X_1^i X_2 ^j - X_3^iX_4^j\right)\right].
\end{equation}

Note that $\mO_4(f)$ is an ovoid of $\Q$ if and only if $\mathcal{S}_f$ contains no affine $\mathbb{F}_q$-rational points off the plane $X_1-X_3=0=X_2-X_4$. 

A fundamental tool to determine the existence of rational points in an algebraic variety over finite fields is the following result by Lang and Weil \cite{LangWeil} in 1954, which can be seen as a generalization of the Hasse-Weil bound.
\begin{theorem}\label{Th:LangWeil}[Lang-Weil Theorem]
Let $\mathcal{V}\subset \mathbb{P}^N(\mathbb{F}_q)$ be an absolutely irreducible variety of dimension $n$ and degree $d$. Then there exists a constant $C$ depending only on $N$, $n$, and $d$ such that 
\begin{equation}\label{Eq:LW}
\left|\#\mathcal{V}(\mathbb{F}_q)-\sum_{i=0}^{n} q^i\right|\leq (d-1)(d-2)q^{n-1/2}+Cq^{n-1}.
\end{equation}
\end{theorem}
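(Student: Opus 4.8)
The final statement is the classical Lang--Weil estimate, so my plan is to reconstruct its standard proof rather than to invent a new argument. The skeleton is an induction on $n=\dim\mathcal{V}$: the base case $n=1$ is the deep arithmetic input (Weil's Riemann Hypothesis for curves), while the inductive step is a hyperplane-pencil fibration engineered so that the leading coefficient $(d-1)(d-2)$ is inherited unchanged from one dimension to the next.

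First I would settle the base cases. For $n=0$ an absolutely irreducible variety over $\mathbb{F}_q$ is a single $\mathbb{F}_q$-rational point, so $\#\mathcal{V}(\mathbb{F}_q)=1=q^0$ and the bound is immediate. For $n=1$, let $\widetilde{C}$ be the normalization of the curve $\mathcal{V}$, a smooth projective absolutely irreducible curve of genus $g$. Weil's theorem gives $\bigl|\#\widetilde{C}(\mathbb{F}_q)-(q+1)\bigr|\le 2g\sqrt{q}$, and since $\mathcal{V}$ differs from $\widetilde{C}$ only through its singular points, whose number is bounded in terms of $d$, one gets $\bigl|\#\mathcal{V}(\mathbb{F}_q)-(q+1)\bigr|\le 2g\sqrt{q}+O_d(1)$. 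Taking a generic projection of $\mathcal{V}$ to a plane curve of degree $d$ gives the genus bound $2g\le(d-1)(d-2)$, which produces exactly the asserted inequality in dimension $1$.

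For the inductive step ($n\ge2$) I would cut $\mathcal{V}$ with a generic pencil of $\mathbb{F}_q$-rational hyperplanes $\{H_\lambda\}_{\lambda\in\mathbb{P}^1}$, whose base locus $B$ is a linear subspace of codimension $2$. Double counting the incidences $(P,H_\lambda)$ with $P\in\mathcal{V}(\mathbb{F}_q)\cap H_\lambda$ yields the exact identity
\[
\sum_{\lambda\in\mathbb{P}^1(\mathbb{F}_q)}\#(\mathcal{V}\cap H_\lambda)(\mathbb{F}_q)=\#\mathcal{V}(\mathbb{F}_q)+q\cdot\#(\mathcal{V}\cap B)(\mathbb{F}_q),
\]
since a point off $B$ lies on a unique member of the pencil while a point of $B$ lies on all $q+1$ members. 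As $\dim(\mathcal{V}\cap B)=n-2$, the correction term is $O(q^{n-1})$ and is absorbed into $Cq^{n-1}$. A Bertini-type irreducibility theorem ensures that all but $O_{N,n,d}(1)$ of the sections $\mathcal{V}\cap H_\lambda$ are absolutely irreducible of dimension $n-1$ and degree $d$; the finitely many exceptional sections contribute only $O_{N,n,d}(1)\cdot O(q^{n-1})=O(q^{n-1})$. Applying the induction hypothesis to each good section and summing, the main term $(q+1)\sum_{i=0}^{n-1}q^i$ matches $\sum_{i=0}^{n}q^i$ up to $O(q^{n-1})$, while the leading error $(q+1)\cdot(d-1)(d-2)q^{(n-1)-1/2}$ equals $(d-1)(d-2)q^{n-1/2}$ up to $O(q^{n-1})$. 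This closes the induction.

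The main obstacle is exactly the propagation of the sharp constant $(d-1)(d-2)$: the argument works only because summing an $(n-1)$-dimensional error of order $q^{(n-1)-1/2}$ over the $\approx q$ members of the pencil raises the exponent by precisely $1$, so the coefficient is carried over intact instead of accumulating. The two places where I expect the real technical work are (i) the Bertini irreducibility statement, including a bound $O_{N,n,d}(1)$ on the number of reducible members of a generic pencil and the existence of a suitable $\mathbb{F}_q$-rational pencil (for the few small $q$ where none is available, the estimate is made trivial by enlarging $C$), and (ii) keeping the degree and genus of the one-dimensional sections bounded purely in terms of $N,n,d$, so that no hidden dependence on $q$ creeps into the constant. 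A cleaner but heavier alternative would bypass the induction: write $\#\mathcal{V}(\mathbb{F}_q)=\sum_i(-1)^i\mathrm{Tr}\bigl(\mathrm{Frob}\mid H^i_c(\overline{\mathcal{V}},\mathbb{Q}_\ell)\bigr)$ by Grothendieck--Lefschetz, extract $q^n$ from $H^{2n}_c$, and bound the remaining weights by Deligne's Riemann Hypothesis together with degree bounds on the Betti numbers $\dim H^i_c$ to recover the coefficient $(d-1)(d-2)$.
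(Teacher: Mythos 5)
The paper offers no proof of this statement: it is quoted verbatim as the classical Lang--Weil theorem with a citation to \cite{LangWeil}, so there is no internal argument to compare against. Judged on its own merits, your sketch is a faithful reconstruction of the original Lang--Weil strategy: base case $n=1$ from Weil's Riemann Hypothesis for curves, with the genus of a normalized degree-$d$ curve bounded via a plane projection by $2g\leq (d-1)(d-2)$, and an inductive step through an $\mathbb{F}_q$-rational pencil of hyperplanes. Your double-counting identity is correct (a rational point off the base locus $B$ lies on a unique, necessarily Frobenius-stable, member of the pencil, hence on a member indexed by $\lambda\in\mathbb{P}^1(\mathbb{F}_q)$), and the bookkeeping that propagates the sharp coefficient is right: $(q+1)\sum_{i=0}^{n-1}q^i$ differs from $\sum_{i=0}^{n}q^i$ by $O(q^{n-1})$, and $(q+1)(d-1)(d-2)q^{n-3/2}=(d-1)(d-2)q^{n-1/2}+O(q^{n-3/2})$, so the error coefficient is inherited intact.

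You also correctly locate where the real work lies, namely a quantitative Bertini statement: the locus of hyperplanes $H$ in the dual space for which $\mathcal{V}\cap H$ fails to be absolutely irreducible of dimension $n-1$ and degree $d$ must be shown to lie in a proper closed subset of degree bounded only in terms of $N,n,d$, so that a generic $\mathbb{F}_q$-rational pencil meets it in $O_{N,n,d}(1)$ points; and for the few $q$ too small to admit such a pencil one enlarges $C$, which is legitimate since $C$ may depend on $N,n,d$. Two points you should make explicit to fully close the sketch: first, bounding the contributions of $\mathcal{V}\cap B$ and of the exceptional sections requires an a priori estimate $\#W(\mathbb{F}_q)=O_{\deg W}\bigl(q^{\dim W}\bigr)$ valid for \emph{possibly reducible} varieties $W$ (itself a short elementary induction, but it cannot be the Lang--Weil bound you are in the middle of proving for irreducible ones); second, the induction hypothesis is applied to sections of degree $d'\leq d$, which is harmless since $(d'-1)(d'-2)\leq(d-1)(d-2)$, but this monotonicity deserves a line. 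With those caveats the argument is sound, and your cohomological alternative via Grothendieck--Lefschetz and Deligne, with Betti numbers bounded in terms of the degree, is indeed the standard modern replacement.
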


\noindent Although the constant $C$ was not  computed in \cite{LangWeil}, explicit estimates have been provided for instance in  \cite{CafureMatera,Ghorpade_Lachaud,Ghorpade_Lachaud2,LN1983,WSchmidt,Bombieri} and they have the general shape $C=f(d)$ provided that $q>g(n,d)$, where $f$ and $g$ are polynomials of (usually) small degree. We refer to \cite{CafureMatera} for a survey on these bounds. Excellent surveys on Hasse-Weil and Lang-Weil type theorems are \cite{Geer,Ghorpade_Lachaud2}. We include here the following result by Cafure and Matera \cite{CafureMatera}. 

\begin{theorem}\cite[Theorem 7.1]{CafureMatera}\label{Th:CafureMatera}
Let $\mathcal{V}\subset\mathrm{AG}(n,q)$ be an absolutely irreducible $\mathbb{F}_q$-variety of dimension $r>0$ and degree $\delta$. If $q>2(r+1)\delta^2$, then the following estimate holds:
$$|\#(\mathcal{V}\cap \mathrm{AG}(n,q))-q^r|\leq (\delta-1)(\delta-2)q^{r-1/2}+5\delta^{13/3} q^{r-1}.$$
\end{theorem}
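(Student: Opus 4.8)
The plan is to reduce the estimate to the one–dimensional case and then invoke the Hasse–Weil bound, tracking all constants explicitly. First I would treat the base case $r=1$, where $\mathcal{V}=\mathcal{C}$ is an absolutely irreducible affine curve of degree $\delta$. Let $\widetilde{\mathcal{C}}$ be the smooth projective model of its projective closure. By the Hasse–Weil bound, $|\#\widetilde{\mathcal{C}}(\mathbb{F}_q)-(q+1)|\leq 2g\,q^{1/2}$, and, via a generic plane projection, the genus is controlled by the degree through $g\leq (\delta-1)(\delta-2)/2$; this already yields the leading term $(\delta-1)(\delta-2)q^{1/2}$. The remaining task is to pass from $\#\widetilde{\mathcal{C}}(\mathbb{F}_q)$ to the number of affine $\mathbb{F}_q$-points of $\mathcal{C}$: one discards the points at infinity (at most $\delta$ of them) and accounts for the finitely many singular points together with the fibers where the normalization map fails to be a bijection. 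Each such discrepancy is bounded by a quantity of size $O(\delta^2)$ via Bézout, and collecting these corrections into the constant in front of $q^{r-1}$ settles the curve case.

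For the inductive step I would argue by induction on $r$, decreasing the dimension by one through a generic hyperplane section. The key geometric input is an effective Bertini-type theorem: for an absolutely irreducible $\mathcal{V}$ of dimension $r\geq 2$ and degree $\delta$, all but a bounded-degree subfamily of hyperplanes $H$ cut out an absolutely irreducible section $\mathcal{V}\cap H$ of dimension $r-1$ and degree at most $\delta$. I would fiber $\mathcal{V}$ over a line by a generic $\mathbb{F}_q$-linear form $\ell$, partitioning $\mathrm{AG}(n,q)$ into the $q$ parallel hyperplanes $\{\ell=c\}$, so that $\#(\mathcal{V}\cap\mathrm{AG}(n,q))=\sum_{c\in\mathbb{F}_q}\#(\mathcal{V}\cap\{\ell=c\})(\mathbb{F}_q)$. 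For a good choice of $\ell$, all but boundedly many fibers are absolutely irreducible of dimension $r-1$ and degree $\leq\delta$, so the inductive hypothesis applies to each; the few bad fibers are bounded crudely and contribute only to lower order with a controlled constant. Summing rebuilds the main term $q^r=q\cdot q^{r-1}$, propagates the leading error as $q\cdot(\delta-1)(\delta-2)q^{r-3/2}=(\delta-1)(\delta-2)q^{r-1/2}$, and multiplies the $q^{r-2}$ correction by $q$ to give the $q^{r-1}$ term. Counting the $\mathbb{F}_q$-rational forms $\ell$ (about $q^n$ of them) against the bounded number of ``bad'' ones shows that a good $\ell$ defined over $\mathbb{F}_q$ exists once $q$ is large enough, which is precisely where the hypothesis $q>2(r+1)\delta^2$ enters, the factor $r+1$ arising because a good form must be secured at each of the $r$ levels of the induction.

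The principal obstacle is not the qualitative Lang–Weil statement but the effective bookkeeping of the constants. Two points demand real care. First, the Bertini reduction must be made effective over $\mathbb{F}_q$: one must guarantee that absolute irreducibility is preserved and that the degree does not grow under successive sections, bounding the ``bad'' locus by a polynomial in $\delta$ independent of $r$, which is what forces the explicit threshold and what ensures a genuinely $\mathbb{F}_q$-rational good hyperplane rather than one merely generic over the algebraic closure. Second, the error terms produced at each level — points at infinity, singular loci, and failures of injectivity of the normalization — accumulate through the induction, and bounding their total by $5\delta^{13/3}q^{r-1}$ rather than by a cruder expression is the delicate part; the unusual exponent $13/3$ emerges precisely from optimizing these Bézout-type degree estimates for the singular and infinite parts of the iterated sections. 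I expect this constant-tracking, rather than any single geometric step, to be where the bulk of the work lies.
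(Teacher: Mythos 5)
A preliminary remark: the paper does not prove this statement at all --- it is quoted verbatim as Theorem 7.1 of \cite{CafureMatera} and used as a black box in the proof of Theorem \ref{Th:Main} --- so your attempt can only be measured against the original argument of Cafure and Matera and on its own internal logic.

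On its own logic, your plan has a concrete quantitative gap: the fiber-by-fiber induction cannot close with an $r$-independent constant. If each good fiber $\mathcal{V}\cap\{\ell=c\}$ is charged the full inductive error $(\delta-1)(\delta-2)q^{r-3/2}+5\delta^{13/3}q^{r-2}$, then summing over the $q$ values of $c$ already produces exactly $(\delta-1)(\delta-2)q^{r-1/2}+5\delta^{13/3}q^{r-1}$, leaving \emph{zero} budget for the bad fibers; but even one reducible or degenerate fiber can contribute on the order of $\delta q^{r-1}$ points (a variety of dimension $r-1$ and degree $\delta$ has at most $\delta q^{r-1}$ rational points), so the constant in front of $q^{r-1}$ must strictly grow at every level of your induction, contradicting the uniform $5\delta^{13/3}$ in the statement. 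This is not a bookkeeping detail you may defer: since the qualitative assertion is just Lang--Weil (Theorem \ref{Th:LangWeil}), the explicit values $5\delta^{13/3}$ and $q>2(r+1)\delta^2$ \emph{are} the theorem, and saying the exponent $13/3$ ``emerges from optimizing B\'ezout-type estimates'' establishes neither. Your structural guess also diverges from the actual proof: Cafure and Matera do not peel off one hyperplane per dimension. They first prove the hypersurface case by \emph{averaging} the point count over the entire family of linear sections, with an effective first Bertini theorem bounding the proportion of sections that fail to be absolutely irreducible (this averaging is what keeps the constant dimension-free), and they then reduce a general $r$-dimensional variety in one stroke to a hypersurface in $\mathrm{AG}(r+1,q)$ via a generic $\mathbb{F}_q$-linear projection, controlling the locus where the projection fails to be birational; the factor $r+1$ in the hypothesis comes from this projection analysis, not, as you conjecture, from securing a good linear form at each of $r$ inductive levels. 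Your base case (Hasse--Weil plus normalization, genus bound $g\leq(\delta-1)(\delta-2)/2$, discarding points at infinity and singular fibers) is sound in spirit, but it is the only part of the sketch that could be carried out as written.
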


For our purposes, the existence of an absolutely irreducible $\mathbb{F}_q$-rational component in $\mathcal {S}_f$ is enough to provide asymptotic non-existence results.

\begin{theorem}\label{Th:Main}
Let $\mathcal{S}_f: F(X_0,X_1,X_2,X_3,X_4)=0$, where $F$ is defined as in \eqref{Eq:F}. Suppose that $q>6.3 (d+1)^{13/3}$, $\deg(f)=d$, and $\mathcal{S}_f$ contains an absolutely irreducible component $ \mathcal{V}$ defined over $\mathbb{F}_q$. Then  $\mO_4(f)$ is not an ovoid of $\Q$.
\end{theorem}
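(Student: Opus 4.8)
The plan is to convert the hypothesis into the existence of an affine $\mathbb{F}_q$-rational point of $\mathcal{S}_f$ lying off the plane $\pi\colon X_1-X_3=0=X_2-X_4$. By the characterization recalled just before the statement---$\mO_4(f)$ is an ovoid of $\Q$ if and only if $\mathcal{S}_f$ has no affine $\mathbb{F}_q$-point off $\pi$---such a point immediately shows that $\mO_4(f)$ is not an ovoid. Since $\mathcal{V}\subseteq\mathcal{S}_f$, it is enough to exhibit such a point on the component $\mathcal{V}$ itself.

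First I would record the numerology. Both terms of $F$ in \eqref{Eq:F} are homogeneous of degree $d+1$, so $\mathcal{S}_f$ is a hypersurface of degree $d+1$ in $\PG(4,q)$; consequently every irreducible component has dimension $3$, whence $\dim\mathcal{V}=3$ and $\delta:=\deg\mathcal{V}\le d+1$. Next I would verify that $\mathcal{V}$ is not contained in the hyperplane at infinity $H_\infty\colon X_0=0$. This amounts to checking $X_0\nmid F$: reducing $F$ modulo $X_0$ leaves $(X_1-X_3)\bigl(\widetilde f(X_1,X_2,0)-\widetilde f(X_3,X_4,0)\bigr)$, together with the term $(X_2-X_4)^2$ surviving when $d=1$, and this is a nonzero polynomial since $\widetilde f(X_1,X_2,0)$ is the degree-$d$ leading form of $f$. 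Hence $H_\infty$ is not a component of $\mathcal{S}_f$, so $\mathcal{V}\not\subseteq H_\infty$ and $\mathcal{V}\cap\{X_0=1\}$ is an absolutely irreducible $\mathbb{F}_q$-variety of dimension $3$ and degree $\delta$.

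The core is a point count. Since $\delta\le d+1$, the bound $q>6.3(d+1)^{13/3}$ comfortably implies $q>2(3+1)\delta^2$, so Theorem~\ref{Th:CafureMatera} applies to $\mathcal{V}\cap\mathrm{AG}(4,q)$ and gives
$$\#\bigl(\mathcal{V}\cap\mathrm{AG}(4,q)\bigr)\ge q^3-(\delta-1)(\delta-2)q^{5/2}-5\delta^{13/3}q^2.$$
I then discard the points on $\pi$. Because $F$ vanishes identically on $\pi$, the whole affine plane $\pi\cap\{X_0=1\}$ lies on $\mathcal{S}_f$, but it has only $q^2$ points, so it contributes at most $q^2$ to the count whether or not $\pi\subseteq\mathcal{V}$. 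Therefore the number of affine $\mathbb{F}_q$-points of $\mathcal{V}$ off $\pi$ is at least
$$q^3-(\delta-1)(\delta-2)q^{5/2}-5\delta^{13/3}q^2-q^2,$$
and it remains to check that this is strictly positive. Dividing by $q^{5/2}$, this is the elementary inequality $q^{1/2}>(\delta-1)(\delta-2)+\bigl(5\delta^{13/3}+1\bigr)q^{-1/2}$; inserting $q>6.3(d+1)^{13/3}$ and $\delta\le d+1$, the surviving positive contribution grows like $\delta^{13/6}$, whose exponent beats the exponent $2$ of the $(\delta-1)(\delta-2)$ term.

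The difficulty here is arithmetic rather than geometric. The conceptual steps---that $\mathcal{V}$ is a three-dimensional component meeting the affine chart, and that the diagonal plane $\pi$ can remove at most $q^2$ of the roughly $q^3$ affine points---are routine; the delicate part is that the tuned constant $6.3$ and the exponent $13/3$ inherited from Theorem~\ref{Th:CafureMatera} are exactly what force the final inequality for every admissible degree, and in particular for the small values of $\delta\ge 2$, where the asymptotic comparison $0.52\,\delta^{13/6}>(\delta-1)(\delta-2)$ is tightest and the inequality must be confirmed by a direct computation. Once that inequality is in hand, the guaranteed affine point of $\mathcal{V}$ off $\pi$ is an affine point of $\mathcal{S}_f$ off $\pi$, contradicting the ovoid condition \eqref{Eq:intro} and completing the proof.
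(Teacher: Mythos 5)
Your proposal follows exactly the paper's route: apply the Cafure--Matera bound (Theorem \ref{Th:CafureMatera}) with $r=3$ and $\delta\le d+1$ to get roughly $q^3$ affine $\mathbb{F}_q$-points on $\mathcal{V}$, observe that the diagonal plane $\pi$ can account for at most $q^2$ of them, and conclude that an affine point off $\pi$ violates \eqref{Eq:intro}. The supporting details you add (that $F$ is homogeneous of degree $d+1$, so $\dim\mathcal{V}=3$ and $\deg\mathcal{V}\le d+1$; that $X_0\nmid F$, so $\mathcal{V}$ meets the affine chart; that $q>2(r+1)\delta^2$ holds) are correct, and they are precisely the points the paper's two-sentence proof glosses over.

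The problem is your final numerical claim. After inserting $q>6.3(d+1)^{13/3}$, the inequality you must verify is, in the worst case $\delta=d+1$, equivalent to $\frac{6.3-5}{\sqrt{6.3}}\,\delta^{13/6}\geq(\delta-1)(\delta-2)+\varepsilon$ with $\varepsilon>0$ tiny; note $1.3/\sqrt{6.3}=0.5179\ldots$, so your constant $0.52$ is rounded \emph{up}, the wrong direction for a lower bound. Moreover the ratio $g(\delta)=(\delta-1)(\delta-2)/\delta^{13/6}$ is \emph{not} tightest at small $\delta$: one computes $g'(\delta)=\tfrac16\delta^{-19/6}(-\delta^2+21\delta-26)$, so $g$ increases up to $\delta=(21+\sqrt{337})/2\approx 19.7$, and $g(20)=342/20^{13/6}=0.5190\ldots>0.5179\ldots$. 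Thus the binding case is $\delta\approx 20$, and there the inequality \emph{fails} by a sliver: for $d=19$ the count exceeds $q^2$ only when $q>2{,}737{,}374$, while the hypothesis guarantees only $q>6.3\cdot 20^{13/3}\approx 2{,}736{,}134$, leaving a window of length about $1{,}200$ (and similar windows for all $17\le d\le 21$) that certainly contains prime powers for which the argument, as written, does not close. To be fair, the paper's own ``it is readily seen'' hides exactly the same deficiency --- the constant would need to be about $6.31$, or these few degrees treated separately --- so your proof is no worse than the published one; but your assertion that the constant $6.3$ ``forces the final inequality for every admissible degree,'' with the tight case at small $\delta$, is incorrect, and a careful referee would ask both you and the authors to repair it.
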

\proof
 Since $q>6.3 (d+1)^{13/3}$, by Theorem \ref{Th:CafureMatera}, with $\delta=d+1$ and $r=3$, it is readily seen that $\mathcal{V}$ contains more than $q^2$ affine $\mathbb{F}_q$-rational points. This yields the existence of at least one affine $\mathbb{F}_q$-rational point $(a,b,c,d)$ in $\mathcal{V}\subset \mathcal{S}_f$ such that $a\neq c$ or $b\neq d$ and therefore $\mO_4(f)$ is not an ovoid of $\Q$.
\endproof

We also include, for sake of completeness, the following results  which will be useful  in the sequel. 

\begin{lemma}\cite[Lemma 2.1]{Aubry}\label{Lemma:Aubry} Let $\mathcal{H},\mathcal{S}\subset \mathrm{PG}(n,q)$ be two projective hypersurfaces. If $\mathcal{H} \cap  \mathcal{S}$ has a reduced absolutely irreducible component defined over $\mathbb{F}_q$ then $\mathcal{S}$ has an absolutely irreducible component defined over $\mathbb{F}_q$.
\end{lemma}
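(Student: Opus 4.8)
The plan is to read both hypersurfaces through their defining polynomials and to track how the $q$-Frobenius acts on the absolutely irreducible components of $\mathcal{S}$. Write $S$ and $H$ for the homogeneous polynomials cutting out $\mathcal{S}$ and $\mathcal{H}$, factor $S=\prod_i S_i^{e_i}$ into distinct absolutely irreducible factors over $\overline{\mathbb{F}_q}$, so that $\mathcal{S}=\bigcup_i V(S_i)$, and let $\mathcal{Z}$ be the given reduced absolutely irreducible component of $\mathcal{H}\cap\mathcal{S}$, defined over $\mathbb{F}_q$. Since $\mathcal{Z}\subseteq\mathcal{S}$ is irreducible it lies in at least one $V(S_i)$. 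The Frobenius $\phi$ permutes the $V(S_i)$, and since $\mathcal{Z}$ is $\mathbb{F}_q$-rational we have $\phi(\mathcal{Z})=\mathcal{Z}$; applying $\phi$ to an inclusion $\mathcal{Z}\subseteq V(S_i)$ yields $\mathcal{Z}\subseteq V(S_i^\phi)$, again a component of $\mathcal{S}$ through $\mathcal{Z}$. Hence the whole lemma reduces to one claim: \emph{$\mathcal{Z}$ is contained in exactly one absolutely irreducible component of $\mathcal{S}$.} Granting this, $V(S_i^\phi)=V(S_i)$, so $V(S_i)$ is $\phi$-invariant, i.e.\ defined over $\mathbb{F}_q$, and it is the desired component. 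I would first dispose of the trivial case: every component of $\mathcal{H}\cap\mathcal{S}$ has codimension at most $2$ in $\mathrm{PG}(n,q)$, so $\dim\mathcal{Z}\in\{n-1,n-2\}$; if $\dim\mathcal{Z}=n-1$ then $\mathcal{Z}$, being an irreducible subvariety of the pure $(n-1)$-dimensional $\mathcal{S}$, is itself a component of $\mathcal{S}$, absolutely irreducible and defined over $\mathbb{F}_q$, and we are done at once. So assume $\dim\mathcal{Z}=n-2$.

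For this codimension-two case I would localise at the generic point $\eta$ of $\mathcal{Z}$, working over $\overline{\mathbb{F}_q}$. The local ring $A=\mathcal{O}_{\mathrm{PG}(n,\overline{\mathbb{F}_q}),\,\eta}$ is a regular local ring of Krull dimension $2$, hence a UFD, with $\dim_{A/\mathfrak{m}}\mathfrak{m}/\mathfrak{m}^2=2$. Let $s,h\in A$ be the local equations of $\mathcal{S},\mathcal{H}$. That $\mathcal{Z}$ is a \emph{reduced} component of $\mathcal{H}\cap\mathcal{S}$ means exactly that the Artinian local ring $A/(s,h)$ is reduced, hence a field, which forces $(s,h)=\mathfrak{m}$. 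Then the residues $\bar s,\bar h$ generate the two-dimensional space $\mathfrak{m}/\mathfrak{m}^2$, so they form a basis; in particular $\bar s\ne 0$, i.e.\ $s\notin\mathfrak{m}^2$. Thus $s$ belongs to a regular system of parameters, $A/(s)$ is a discrete valuation ring, and $s$ is a prime element of the UFD $A$. Comparing with the local factorisation $s=\prod_i s_i^{e_i}$ (where $s_i$ is a unit precisely when $\mathcal{Z}\not\subseteq V(S_i)$), the fact that $s$ equals a single prime to the first power forces exactly one factor $S_i$ to vanish on $\mathcal{Z}$. This establishes the claim, and with it the lemma.

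The only genuine obstacle is this last step — extracting uniqueness of the containing component from the reducedness hypothesis — and it is precisely where ``reduced'' is indispensable. If instead $\mathcal{Z}$ arose as the non-reduced meeting of two Frobenius-conjugate components $V(S_i)$ and $V(S_i^\phi)$, then $s\in\mathfrak{m}^2$, the ring $A/(s,h)$ would fail to be a field, $\mathcal{Z}$ would sit in two components genuinely permuted by $\phi$, and $\mathcal{S}$ would carry no absolutely irreducible $\mathbb{F}_q$-component even though $\mathcal{H}\cap\mathcal{S}$ does. Everything else — the factorisation over $\overline{\mathbb{F}_q}$, the permutation action of $\phi$, and the dimension bookkeeping via the projective intersection bound — is routine commutative algebra of regular local rings, so I would keep those parts brief and concentrate the write-up on the passage $(s,h)=\mathfrak{m}\Rightarrow s\notin\mathfrak{m}^2\Rightarrow s$ prime.
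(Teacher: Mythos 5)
Your proof is correct. Note first that the paper does not actually prove this lemma: it is quoted, for completeness, from Aubry--McGuire--Rodier \cite[Lemma 2.1]{Aubry}, so there is no internal proof to compare against. Your argument follows the same overall strategy as that reference: reduce everything to the claim that the reduced component $\mathcal{Z}$ lies in \emph{exactly one} absolutely irreducible component of $\mathcal{S}$, then conclude by the Galois argument (Frobenius permutes the components of $\mathcal{S}$, fixes $\mathcal{Z}$, hence fixes the unique component containing it, which is therefore defined over $\mathbb{F}_q$). Where you differ is in how uniqueness is extracted from reducedness: the original argues via intersection multiplicities (if $\mathcal{Z}$ lay on two distinct components of $\mathcal{S}$, it would occur in the intersection $\mathcal{S}\cap\mathcal{H}$ with multiplicity at least $2$), whereas you localize at the generic point of $\mathcal{Z}$ and work in the two-dimensional regular local ring: reducedness forces $(s,h)=\mathfrak{m}$, hence $s\notin\mathfrak{m}^2$, hence $s$ is prime, hence exactly one absolutely irreducible factor of the defining polynomial vanishes along $\mathcal{Z}$. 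The two routes are equivalent in content; your version is self-contained commutative algebra and makes the role of the reducedness hypothesis completely explicit (including your correct closing remark that without it $\mathcal{Z}$ could be the non-reduced meeting of two Frobenius-conjugate components, in which case the conclusion genuinely fails), while the original is shorter for a reader comfortable with intersection cycles. Your separate treatment of the codimension-one case, where $\mathcal{Z}$ is itself a component of $\mathcal{S}$, is also correct and necessary, since the height bound only gives $\dim\mathcal{Z}\in\{n-1,n-2\}$.
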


\begin{lemma}\cite[Lemma 2.9]{DanYue2020}\label{Lemma:DanYue2020} Let $\mathcal{S}$ be a hypersurface containing $O=(0, 0,\ldots, 0)$ of  affine equation $F(X_1, ..., X_n) =0$, where $F(X_1,\ldots,X_n)=F_d(X_1,\ldots,X_n)+F_{d+1}(X_1,\ldots,X_n)+\cdots$, with $F_i$ the homogeneous part of degree  $i$ of $F(X_1,\ldots,X_n)$ for $i =d, d +1,\dots$. Let $P$ be an $\mathbb{F}_q$-rational simple point of the variety $F_d(X_1,\ldots,X_n)=0$. Then there exists an $\mathbb{F}_q$-rational plane $\pi$ through the line $\ell$ joining $O$ and $P$ such that $\pi \cap \mathcal{S}$ has $\ell$ as a non-repeated tangent $\mathbb{F}_q$-rational line at the origin and $\pi \cap \mathcal{S}$ has a non-repeated absolutely irreducible $\mathbb{F}_q$-rational component.
\end{lemma}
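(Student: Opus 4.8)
The plan is to make everything local at $O$ and reduce the statement to a one‑parameter family of plane sections, controlled entirely by the tangent cone $F_d=0$. First I would normalize by an $\mathbb{F}_q$-linear change of coordinates fixing $O$ and sending $P$ to $(1,0,\ldots,0)$, so that $\ell$ becomes the $X_1$-axis. Such a change is degree-preserving and $\mathbb{F}_q$-rational, hence it respects the decomposition $F=F_d+F_{d+1}+\cdots$ and all hypotheses. I would then parametrize the $\mathbb{F}_q$-rational planes through $\ell$ by vectors $w=(w_2,\ldots,w_n)\in\mathbb{F}_q^{\,n-1}$, setting $\pi_w=\{(s,tw_2,\ldots,tw_n):s,t\in\mathbb{F}_q\}$ and $G(s,t):=F(s,tw_2,\ldots,tw_n)$, whose zero locus is the plane curve $\pi_w\cap\mathcal{S}$.

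The first assertion amounts to choosing $w$ so that $\ell:\,t=0$ is a simple tangent. Here I would use that each $F_j(s,tw)$ is homogeneous of degree $j$ in $(s,t)$, so the lowest‑degree form of $G$ is $G_d=F_d(s,tw)$ as soon as it is nonzero, and then $O$ has multiplicity $d$ on the section. Since $F_d(P)=0$, homogeneity gives $F_d(s,0)=0$, so $t\mid G_d$ and $\ell$ is a tangent line. The coefficient of $s^{d-1}t$ in $G_d$ is the linear form $w\mapsto\sum_{i\ge 2}w_i\,\partial_{X_i}F_d(P)$. By Euler's identity $\sum_i X_i\partial_{X_i}F_d=dF_d$, evaluation at $P=(1,0,\ldots,0)$ yields $\partial_{X_1}F_d(P)=dF_d(P)=0$ in \emph{every} characteristic; and since $P$ is a simple point of $F_d=0$ we have $\nabla F_d(P)\neq 0$, whence $(\partial_{X_2}F_d(P),\ldots,\partial_{X_n}F_d(P))\neq 0$ and the above linear form is not identically zero. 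Choosing $w\in\mathbb{F}_q^{\,n-1}$ making it nonzero (possible over any field) forces $G_d\neq 0$, $t\mid G_d$ but $t^2\nmid G_d$, and $w\neq 0$, so $\pi_w$ is a genuine $\mathbb{F}_q$-rational plane and $\ell$ is a non‑repeated tangent of $\pi_w\cap\mathcal{S}$ at $O$.

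For the second assertion I would factor $G=\prod_k\Gamma_k^{e_k}$ over $\overline{\mathbb{F}_q}$ into distinct absolutely irreducible factors. Because the lowest‑degree form of a product is the product of lowest‑degree forms, $G_d=\prod_k\lambda_k^{e_k}$ with $\lambda_k$ the lowest form of $\Gamma_k$. As $t$ is prime and its multiplicity in $G_d$ is exactly $1$, there is a unique factor $\Gamma:=\Gamma_{k_0}$ with $t\mid\lambda_{k_0}$, and the relation $e_{k_0}\cdot v_t(\lambda_{k_0})=1$ forces $e_{k_0}=1$. Thus $\Gamma$ is the unique absolutely irreducible factor of $G$ whose tangent cone at $O$ contains $\ell$, and it is non‑repeated. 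Finally, since $G\in\mathbb{F}_q[s,t]$ the Frobenius permutes the $\Gamma_k$ preserving the multiplicities $e_k$ and fixes the $\mathbb{F}_q$-rational line $\ell$; hence it sends $\Gamma$ to an absolutely irreducible factor of multiplicity $1$ tangent to $\ell$, which by uniqueness is $\Gamma$ again. A Frobenius‑invariant factor is defined over $\mathbb{F}_q$, giving a non‑repeated, absolutely irreducible, $\mathbb{F}_q$-rational component of $\pi_w\cap\mathcal{S}$.

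The hard part will be the characteristic‑free bookkeeping: I must make sure the Euler‑relation step eliminating $\partial_{X_1}F_d(P)$ works with no hypothesis $p\nmid d$, and that the passage from the tangent cone of $G$ to those of its irreducible factors is phrased so that the single multiplicity‑one condition $v_t(G_d)=1$ genuinely forces the uniqueness of $\Gamma$. This one condition is what simultaneously produces reducedness, a distinguished absolutely irreducible component, and—through Galois invariance—its $\mathbb{F}_q$-rationality; stating it cleanly is where I would concentrate the care.
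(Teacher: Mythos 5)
Your proof is correct and follows essentially the same route as the source: the paper only cites this lemma from \cite{DanYue2020} without reproducing its proof, and your argument --- choosing the plane through $\ell$ via nonvanishing of the linear form $w\mapsto\sum_{i\geq 2}w_i\,\partial_{X_i}F_d(P)$ (with $\partial_{X_1}F_d(P)=0$ forced by Euler's identity in every characteristic, exactly as you feared needed checking), then reading off from $v_t(G_d)=1$ the unique multiplicity-one absolutely irreducible factor of $G$ tangent to $\ell$ and concluding its $\mathbb{F}_q$-rationality by Frobenius invariance --- is precisely the standard tangent-cone/plane-section proof given there. The only point to polish is the routine scalar subtlety that Frobenius fixes $\Gamma$ a priori only up to a constant, removed by normalizing one coefficient to $1$.
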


%\begin{lemma}\cite[Theorem 3]{Zahid}\label{Lemma:Zahid}
%Let $\mathcal{S}$ be an absolutely irreducible hypersurface of degree $d$ defined over $\mathbb{F}_q$. Then provided that $q>\frac{3d^4-4d^3+5d^2}{2}$, there is a nonsingular point of $\mathcal{S}$.
%\end{lemma}

In what follows, $f(X,Y)=\sum_{i,j}a_{i,j}X^iY^j$, with $a_{i,j}\in \mathbb{F}_q$ for all $i,j$, and $d=\deg(f)=\max\{i+j: a_{i,j}\neq0\}$. So, 
\begin{equation}\label{Eq:S_f}
F(X_0,X_1,X_2,X_3,X_4)=(X_2 -X_4)^2X_0^{d-1} + (X_1-X_3) \left[\sum_{i,j} a_{ij}
\left(X_1^i X_2 ^j -X_3^iX_4^j\right)X_0^{d-i-j}\right].
\end{equation}

In view of Theorem \ref{Th:Main}, our goal is to find conditions for which $\mathcal{S}_f$ possesses an absolutely irreducible component defined over $\mathbb{F}_q$.  Recall that $f(0,0)=0$ and therefore   $a_{0,0}=0$. Also, if $p>2$, since $\mathcal{S}_f$ is equivalent to $\mathcal{S}_{f^{\prime}}:F(X_0,X_1,X_2-a_{0,1}X_1/2 ,X_3,X_4-a_{0,1}X_3/2)=0$, where $a_{0,1}^{\prime}=0$, without loss of generality we can assume $a_{0,1}=0$.

\begin{proposition}
Let $\mathcal{S}_f: F(X_0,X_1,X_2,X_3,X_4)=0$, with $F$  as in \eqref{Eq:S_f}. Let $d>1$. If there exists  $i\neq d$ such that $a_{i,d-i}\neq 0$ then $\mathcal{S}_f$ contains an absolutely irreducible component defined over $\mathbb{F}_q$.
\end{proposition}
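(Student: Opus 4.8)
The plan is to restrict $\mathcal{S}_f$ to the hyperplane at infinity $H_\infty\colon X_0=0$ and to exhibit a reduced absolutely irreducible $\mathbb{F}_q$-rational component of the resulting surface, so that Lemma~\ref{Lemma:Aubry} (applied with $\mathcal{H}=H_\infty$ and $\mathcal{S}=\mathcal{S}_f$) immediately yields an absolutely irreducible $\mathbb{F}_q$-rational component of $\mathcal{S}_f$ itself.

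First I would set $X_0=0$ in \eqref{Eq:S_f}. Since $d>1$, the term $(X_2-X_4)^2X_0^{d-1}$ vanishes, and among the monomials $a_{ij}(X_1^iX_2^j-X_3^iX_4^j)X_0^{d-i-j}$ only those with $i+j=d$ survive. Writing $f_d(U,V)=\sum_{i+j=d}a_{ij}U^iV^j$ for the leading form of $f$, the intersection $H_\infty\cap\mathcal{S}_f$ is therefore the surface of $\mathrm{PG}(3,q)$ (with coordinates $X_1,X_2,X_3,X_4$) defined by
\[
G(X_1,X_2,X_3,X_4)=(X_1-X_3)\big(f_d(X_1,X_2)-f_d(X_3,X_4)\big)=0.
\]

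The hyperplane $X_1-X_3=0$ is an absolutely irreducible $\mathbb{F}_q$-rational component of $G=0$; the only thing to check is that it is reduced, i.e.\ that $X_1-X_3$ does not divide the second factor $Q:=f_d(X_1,X_2)-f_d(X_3,X_4)$. Substituting $X_1=X_3$ gives $f_d(X_3,X_2)-f_d(X_3,X_4)=\sum_{i+j=d}a_{ij}X_3^i(X_2^j-X_4^j)$, which is the zero polynomial precisely when $a_{i,d-i}=0$ for every $i\neq d$. The hypothesis provides some $i\neq d$ with $a_{i,d-i}\neq0$, so this substitution is nonzero, whence $X_1-X_3\nmid Q$ and $X_1-X_3$ occurs in $G$ with multiplicity one.

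Thus $H_\infty\cap\mathcal{S}_f$ has the reduced absolutely irreducible $\mathbb{F}_q$-rational component $X_1-X_3=0$, and Lemma~\ref{Lemma:Aubry} finishes the argument. The only genuinely substantive point is the reducedness (divisibility) check in the previous paragraph; everything else is a direct computation, and the role of the hypothesis that $a_{i,d-i}\neq0$ for some $i\neq d$ (equivalently, that the leading form $f_d$ genuinely involves the second variable) is exactly to guarantee that the factor $X_1-X_3$ is not absorbed into the second factor of $G$.
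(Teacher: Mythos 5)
Your proof is correct and follows essentially the same route as the paper's: restrict to $H_\infty$, observe that the intersection is $(X_1-X_3)\bigl(f_d(X_1,X_2)-f_d(X_3,X_4)\bigr)=0$, verify via the substitution $X_1=X_3$ that the hypothesis $a_{i,d-i}\neq 0$ for some $i\neq d$ forces $X_1-X_3$ to be a non-repeated factor, and conclude with Lemma~\ref{Lemma:Aubry}. Your explicit remark that $d>1$ is what kills the term $(X_2-X_4)^2X_0^{d-1}$ at infinity is a nice touch that the paper leaves implicit.
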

\begin{proof}
Let $\mathcal{S}^{\prime}=\mathcal{S}_f\cap H_{\infty}$, where $H_{\infty}: X_0=0$. Now, the equation of $\mathcal{S}^{\prime}$ reads 
$$(X_1-X_3)\sum_{i\leq d} a_{i,d-i}
\left(X_1^i X_2 ^{d-i} -X_3^iX_4^{d-i}\right)=0.$$
Note that $(X_1-X_3)\mid G(X_1,X_2,X_3,X_4):=\sum_{i\leq d} a_{i,d-i}
\left(X_1^i X_2 ^{d-i} -X_3^iX_4^{d-i}\right)$ if and only if $G(X_1,X_2,X_1,X_4)=\sum_{i\leq d} a_{i,d-i}X_1 ^{i}
\left(X_2^{d-i} -X_4^{d-i}\right)$ is the zero polynomial. This would be  possible if and only if   $G(X_1,X_2,X_3,X_4)=a_{d,0}\left(X_1^d -X_3^d\right)$, that is $a_{i,d-i}=0$ for all $i\neq d$, a contradiction.

Suppose now that there exists an $i\neq d$ such that $a_{i,d-i}\neq 0$. Since  $(X_1-X_3)$ does not divide  $G(X_1,X_2,X_3,X_4)$, $(X_1-X_3)$ is a non-repeated absolutely irreducible component of $\mathcal{S}^{\prime}$ defined over $\mathbb{F}_q$. By Lemma \ref{Lemma:Aubry}, $\mathcal{S}$ contains an absolutely irreducible component defined over $\mathbb{F}_{q}$ (passing through $H_{\infty}\cap (X_1=X_3))$. 
\end{proof}

%\begin{corollary}
%Let $\mathcal{S}_f$ as in \eqref{Eq:S_f}. If $a_{i,d-i}=0$ for all  $i\neq d$, then  $F(X_1,X_2,X_3,X_4)= (X_2 -X_4)^2 + a_{d,0} (X_1^d-X_3^d).$
%\end{corollary}

In what follows we denote by $\overline{j}$ the nonnegative integer $\max\{j : a_{i,j}\neq 0\}$.
\begin{proposition}\label{Prop:j_bar}
Let $\mathcal{S}_f: F(X_0,X_1,X_2,X_3,X_4)=0$, with $F$  as in \eqref{Eq:S_f}. If $\overline{j}>1$ then $\mathcal{S}_f$ contains an absolutely irreducible component defined over $\mathbb{F}_q$.
\end{proposition}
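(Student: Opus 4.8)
The plan is to examine $\mathcal{S}_f$ at the distinguished point $P_\infty=(0:0:0:0:1)$, which lies on $\mathcal{S}_f$ since $d\geq \overline{j}\geq 2$, and to extract an absolutely irreducible $\fq$-component from the tangent cone there. Working in the affine chart $X_4=1$ with $P_\infty$ as the origin, I would dehomogenise \eqref{Eq:S_f} to
\[
\widehat{F}=(X_2-1)^2X_0^{\,d-1}+(X_1-X_3)\sum_{i,j}a_{i,j}\bigl(X_1^iX_2^j-X_3^i\bigr)X_0^{\,d-i-j},
\]
and compute its lowest-degree form $F_m$ at the origin. Writing $f(X,Y)=\sum_j c_j(X)Y^j$ with $c_{\overline{j}}\neq 0$ and setting $\gamma(X_3,X_0)=\sum_i a_{i,\overline{j}}X_3^iX_0^{\,d-\overline{j}-i}$ (the homogenisation of $c_{\overline{j}}$, which is nonzero), the lowest-degree contributions come from the terms $-(X_1-X_3)a_{i,\overline{j}}X_3^iX_0^{\,d-i-\overline{j}}$, of degree $d-\overline{j}+1$, together with the monomial $X_0^{\,d-1}$ from the square term when $\overline{j}=2$. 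Hence $m=d-\overline{j}+1$ and
\[
F_m=-(X_1-X_3)\,\gamma(X_3,X_0)\ \ (\overline{j}>2),\qquad F_m=X_0^{\,d-1}-(X_1-X_3)\,\gamma(X_3,X_0)\ \ (\overline{j}=2).
\]

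In either case $F_m$ has degree $1$ in $X_1$ with leading coefficient $-\gamma(X_3,X_0)\not\equiv 0$, so $\partial F_m/\partial X_1=-\gamma(X_3,X_0)$ does not vanish identically and $\{F_m=0\}$ is smooth wherever $\gamma\neq 0$. Since $q$ is large, I can pick $\xi\in\fq$ with $\gamma(\xi,1)=c_{\overline{j}}(\xi)\neq 0$ (as $c_{\overline{j}}$ has at most $\deg c_{\overline{j}}\le d$ roots) and solve the linear equation $F_m=0$ for $X_1$ at $(X_0,X_3)=(1,\xi)$, $X_2=0$; this produces an $\fq$-rational simple point $P\neq P_\infty$ of the tangent cone $\{F_m=0\}$. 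Note that this computation is uniform in $\overline{j}\ge 2$.

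With such a simple point available, Lemma \ref{Lemma:DanYue2020} yields an $\fq$-rational plane $\pi$ through the line $\ell=\overline{P_\infty P}$ such that $\pi\cap\mathcal{S}_f$ carries a non-repeated absolutely irreducible $\fq$-component. Writing $\pi$ as the intersection of two $\fq$-hyperplanes and climbing back up one hyperplane at a time via Lemma \ref{Lemma:Aubry}, I would conclude that $\mathcal{S}_f$ itself possesses an absolutely irreducible component defined over $\fq$, as required.

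The hard part is this final lifting step. Passing from a reduced absolutely irreducible component of the plane section up to a component of the three-fold $\mathcal{S}_f$ is not a formal dimension count: in $\PG(4,q)$ two distinct three-folds can meet in dimension $\ge 2$, so Galois-invariance of the curve does not by itself pin down an $\fq$-rational component. This is exactly the content of Lemma \ref{Lemma:Aubry}, but a single application delivers only an absolutely irreducible---possibly non-reduced---component, so one must check that reducedness is preserved at each of the two steps; this is precisely what the ``non-repeated tangent line'' clause of Lemma \ref{Lemma:DanYue2020} secures. The only other thing to watch is the minor bookkeeping separating $\overline{j}=2$, where the square term $(X_2-1)^2X_0^{\,d-1}$ still feeds into $F_m$, from $\overline{j}>2$, where it does not.
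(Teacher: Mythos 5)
Your proposal is correct and takes essentially the same route as the paper: the paper also works at the point $(0:0:0:0:1)$ (via the coordinate swap $X_0\leftrightarrow X_4$ and then dehomogenizing), computes the same tangent cone in the two cases $\overline{j}\geq 3$ and $\overline{j}=2$, produces a simple $\mathbb{F}_q$-rational point on it, and concludes by Lemma \ref{Lemma:DanYue2020} followed by two applications of Lemma \ref{Lemma:Aubry}. Your uniform treatment of the two cases (linearity of $F_m$ in $X_1$ with leading coefficient $-\gamma$) and your explicit construction of the simple point are just more detailed versions of steps the paper asserts.
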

\begin{proof}
%Let 
%$$\widetilde{F}(X_1,X_2,X_3,X_4,T):=(X_2 -X_4)^2T^{d-1} + (X_1-X_3) \left[\sum_{i,j} a_{ij}
%\left(X_1^i X_2 ^j -X_3^iX_4^j\right)T^{d-i-j}\right].$$

The polynomial $F(X_4,X_1,X_2,X_3,X_0)$ reads 
$$(X_2 -X_0)^2X_4^{d-1} + (X_1-X_3) \left[\sum_{i,j} a_{ij}
\left(X_1^i X_2 ^j -X_3^iX_0^j\right)X_4^{d-i-j}\right]$$
and the surface $\widetilde{\mathcal{S}} : F(X_4,X_1,X_2,X_3,X_0)=0$ is projectively equivalent to $\mathcal{S}_f$.

We study now the tangent cone $\Omega$ of $\widetilde{\mathcal{S}}$ at the origin, whose equation depends on $\overline{j}$.

We distinguish three cases.
\begin{enumerate}
\item[(i)]\label{Case_i} $\overline{j}\geq 3$. The tangent cone $\Omega$ reads $(X_1-X_3)\sum_{i}a_{i,\overline{j}}X_3^iX_4^{d-i-\overline{j}}$.
\item[(ii)] $\overline{j}= 2$. Then $\Omega$ reads $X_4^{d-1}-(X_1-X_3)\sum_{i}a_{i,2}X_3^iX_4^{d-i-2}$.
\item[(iii)] $\overline{j}\leq 1$. The tangent cone reads $X_4^{d-1}$.
\end{enumerate}

If case (i) holds, then $X_1-X_3=0$ is a non-repeated $\mathbb{F}_q$-rational component of such a tangent cone. Also, there exists a non-singular point $P$ for $\Omega$.   By Lemma \ref{Lemma:DanYue2020} there exists an $\mathbb{F}_q$-rational plane $\pi$ through the line $\ell$ joining the origin  and $P$ such that $\pi\cap \widetilde{\mathcal{S}}$ has $\ell$ as a non-repeated tangent $\mathbb{F}_q$-rational line at the origin and $\pi\cap \widetilde{\mathcal{S}}$ has a non-repeated absolutely irreducible $\mathbb{F}_q$-rational component. Using twice Lemma \ref{Lemma:Aubry}, one deduces the existence of an absolutely irreducible $\mathbb{F}_q$-rational component in $\widetilde{\mathcal{S}}$ and therefore in $\mathcal{S}_f$.

If case (ii) holds, then $\Omega$ is a rational surface ($X_1$ has degree $1$). This means that $\Omega$ contains an absolutely irreducible $\mathbb{F}_q$-rational component of degree $1$ in $X_1$. It is readily seen that affine  singular points of $\Omega$ satisfy $X_4=0$ and therefore there exists at least a   non-singular point $P$ for $\Omega$. The same conclusion as in case (i) holds. 
\end{proof}

\begin{proposition}\label{Prop:Final_p_even}
Let $\mathcal{S}_f: F(X_0,X_1,X_2,X_3,X_4)=0$, with $F$  as in \eqref{Eq:S_f}, and $p=2$.  If  there exists an  $a_{i,j}\neq 0$, with $(i,j)\notin \{(1,0),(0,1)\}$, then  $\mathcal{S}_f$ contains an absolutely irreducible component defined over $\mathbb{F}_q$.
%If $p=2$ then the unique (possible)  nonzero coefficients are $a_{1,0}$ and $a_{0,1}$.
\end{proposition}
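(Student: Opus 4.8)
The plan is to split on the value of $\overline{j}$: when $\overline{j}\ge 2$ the statement follows from Proposition \ref{Prop:j_bar}, so the real work is the range $\overline{j}\le 1$, in which $f(X,Y)=g(X)+h(X)Y$ with $g(X)=\sum_i a_{i,0}X^i$ and $h(X)=\sum_i a_{i,1}X^i$. There my aim is to prove that $F$ itself (or, in a degenerate direction, a hypersurface it cones over) is \emph{absolutely irreducible}; since $\mathcal{S}_f$ is defined over $\mathbb{F}_q$, absolute irreducibility immediately supplies the required $\mathbb{F}_q$-rational absolutely irreducible component and one concludes via Theorem \ref{Th:Main}. Throughout I would work in the affine chart $X_0=1$ (so the homogenizations $\widetilde g,\widetilde h$ specialize to $g,h$) and use that in characteristic $2$ the quadric term is $(X_2-X_4)^2X_0^{d-1}=X_2^2+X_4^2$ there.

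First I would dispose of the sub-case where $h$ is non-constant. Viewing $F$ as a quadratic in $X_2$ over $K=\overline{\mathbb{F}}_q(X_1,X_3,X_4)$ one gets $F=X_2^2+BX_2+C$ with $B=(X_1+X_3)h(X_1)$ and $C=X_4^2+(X_1+X_3)h(X_3)X_4+(X_1+X_3)(g(X_1)+g(X_3))$. The leading coefficient is $1$, so $F$ is primitive and, by Gauss's lemma together with the Artin--Schreier criterion, absolute irreducibility is equivalent to $u:=C/B^2\notin\wp(K)$, where $\wp(w)=w^2+w$. Regarding $u$ as a degree-$2$ polynomial in $X_4$ over $k_0=\overline{\mathbb{F}}_q(X_1,X_3)$ and subtracting $\wp(X_4/B)$ cancels the order-$2$ pole at $X_4=\infty$ and leaves a residual simple pole whose coefficient is $\frac{h(X_1)+h(X_3)}{(X_1+X_3)h(X_1)^2}$. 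This is nonzero precisely because $h$ is non-constant, so $u$ has a pole of odd order at the rational place $X_4=\infty$; hence $u\notin\wp(K)$ and $F$ is absolutely irreducible.

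It remains to treat $h$ constant, say $h\equiv a_{0,1}$; here the hypothesis forces a term $a_{i,0}$ with $i\ge 2$, so $\deg g=d\ge 2$. Now $X_2,X_4$ enter $F$ only through $W=X_2+X_4$, so $\mathcal{S}_f$ is a cone over the hypersurface $\Phi=0$ in the variables $(W,X_0,X_1,X_3)$, where $\Phi=W^2X_0^{d-1}+a_{0,1}(X_1+X_3)X_0^{d-1}W+(X_1+X_3)^2R$ and $R=(\widetilde g(X_1,X_0)+\widetilde g(X_3,X_0))/(X_1+X_3)$; it suffices to prove $\Phi$ absolutely irreducible. Reading $\Phi$ as a quadratic in $W$, the governing Artin--Schreier invariant is $R/(a_{0,1}^2X_0^{d-1})$ (and, when $a_{0,1}=0$, the purely inseparable invariant $(X_1+X_3)^2R/X_0^{d-1}$), whose only singularity lies along $X_0=0$. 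The crux is the identity $R|_{X_0=0}=a_{d,0}\sum_{k=0}^{d-1}X_1^kX_3^{d-1-k}$: because this form carries the monomial $X_1X_3^{d-2}$, it is not a perfect square in $\overline{\mathbb{F}}_q[X_1,X_3]$ for $d\ge 2$. Consequently the leading coefficient of the pole of the invariant along $X_0=0$ is a non-square in the residue field $\overline{\mathbb{F}}_q(X_1,X_3)$, which obstructs membership in $\wp$ (respectively in the field of squares), yielding absolute irreducibility of $\Phi$.

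The main obstacle is exactly this $h$-constant sub-case. The tangent cone of $\mathcal{S}_f$ at the base point is the totally non-reduced $X_4^{d-1}$ — which is precisely why Proposition \ref{Prop:j_bar} halts at $\overline{j}\ge 2$ — so Lemmas \ref{Lemma:DanYue2020} and \ref{Lemma:Aubry} are unavailable there, and one is forced into the irreducibility computation above. Moreover the relevant invariant has an \emph{even}-order pole along $X_0=0$, so a naive parity-of-pole-order argument is inconclusive; the step I expect to need the most care is verifying that, despite the even pole order, the non-square nature of the leading coefficient $R|_{X_0=0}$ genuinely prevents the reduction $u=\wp(w)$. Establishing that non-square property ultimately reduces to the elementary observation that $\sum_{k}X_1^kX_3^{d-1-k}$ contains a monomial of odd degree in $X_1$.
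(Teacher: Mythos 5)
Your proposal is correct, and in both substantive sub-cases it takes a genuinely different route from the paper. For $h$ non-constant (some $a_{i,1}\neq 0$ with $i>0$) the paper never proves irreducibility of $\mathcal{S}_f$ at all: it intersects with the hyperplane $X_2=X_4$, observes that the section, once the factor $X_1+X_3$ is removed, has degree one in $X_2$ and hence carries a reduced absolutely irreducible $\mathbb{F}_q$-rational component, and lifts this to $\mathcal{S}_f$ by Lemma \ref{Lemma:Aubry}. Your Artin--Schreier computation at the place $X_4=\infty$ (subtracting $\wp(X_4/B)$ to reach a pole of order one, with residual coefficient $(h(X_1)+h(X_3))/((X_1+X_3)h(X_1)^2)\neq 0$) proves the stronger fact that $\mathcal{S}_f$ itself is absolutely irreducible and bypasses Lemma \ref{Lemma:Aubry}; the paper's route is softer and shorter. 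For $h$ constant the paper splits again: when $a_{0,1}=0$ it appeals to rationality in $Z=(X_2+X_4)^2$, and when $a_{0,1}\neq 0$ it posits a splitting $\overline{Z}=\overline{Z}_1/\overline{Z}_2$, compares degrees and top homogeneous forms, and reduces to showing that $(X_1^d+X_3^d)/(X_1+X_3)$ is not a square, which it does by writing $d=2^jk$ and tracking the multiplicity of $X_1+X_3$. You treat both values of $a_{0,1}$ uniformly through the cone $\Phi$ and the pole along $X_0=0$, and your non-square certificate for the same polynomial $R|_{X_0=0}=a_{d,0}\sum_{k=0}^{d-1}X_1^kX_3^{d-1-k}$ --- the monomial $X_1X_3^{d-2}$ has an odd exponent, while squares in characteristic two have only even exponents --- is cleaner than the paper's multiplicity count. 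At bottom the two arguments read off the same Artin--Schreier obstruction in two different charts (the paper via top-degree forms in the chart $X_0=1$, you at the place $X_0=0$).

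One slip should be repaired in the $h$-constant case: you assert that the invariant $R/(a_{0,1}^2X_0^{d-1})$ has an \emph{even}-order pole along $X_0=0$. Since $X_0\nmid R$, its pole order is exactly $d-1$, which is even only when $d$ is odd. When $d$ is even the pole order is odd, and for an odd-order pole the square class of the leading coefficient is not invariant (a change of uniformizer $t\mapsto ct$ rescales it by an odd power of $c$), so your non-square criterion is not the right tool there; but then the plain parity obstruction you already use in the $h$ non-constant case applies at once. The correct split is therefore: $d$ even gives an odd-order pole, done by parity; $d$ odd gives an even-order pole, done by the non-squareness of $R|_{X_0=0}$. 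With this one-sentence adjustment your proof is complete.
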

\proof

From  Proposition \ref{Prop:j_bar}, if $\overline{j}>1$ then $\mathcal{S}_f$ contains an absolutely irreducible component defined over $\mathbb{F}_q$. Therefore we can suppose that $\overline{j}\leq1$. Suppose that $d>1$.

Consider again Equation \eqref{Eq:F} and let $\mathcal{W}=\mathcal{S}_f \cap (X_2=X_4)$ be defined by %$F(X_0,X_1,X_2,X_3,X_2)=0$, where 
\begin{eqnarray*}
&& (X_1+X_3) \left[\sum_{i,j} a_{ij}X_2^jX_0^{d-i-j}
\left(X_1^i  +X_3^i\right)\right]\\
&=&(X_1+X_3) \left( X_2 \sum_{0< i\leq d-2} a_{i,1}(X_1^i  +X_3^i)X_0^{d-i-1}+ \sum_{0< i\leq d} a_{i,0}(X_1^i  +X_3^i)X_0^{d-i}\right).
\end{eqnarray*}
If some of the $a_{i,1}$'s, $0<i\leq d-2$, is nonzero then  $\mathcal{W}$ contains a rational component of degree $1$ in $X_2$ and therefore $\mathcal{S}_f$ contains an absolutely irreducible component defined over $\mathbb{F}_q$ through it. 

So, $a_{i,1}=0$ for $i>0$ and  $F(X_0,X_1,X_2,X_3,X_4)$ reads

$$(X_2 +X_4)^2X_0^{d-1} + a_{0,1}(X_1+X_3)(X_2+X_4)X_0^{d-1} +(X_1+X_3)\sum_{0<i\leq d}  a_{i,0}
\left(X_1^i  +X_3^i\right)X_0^{d-i}.$$
If $a_{0,1}=0$ then $\mathcal{S}_f$ is clearly a rational surface (in $Z=(X_2 +X_4)^2$) and therefore it  contains an absolutely irreducible component defined over $\mathbb{F}_q$.

Suppose now $a_{0,1}\neq 0$ and consider now the affine part of $\mathcal{S}_f: F(1,X_1,X_2,X_3,X_4)=0$. It can be written as 
\begin{equation}\label{Eq:Y}
Z^2+Z+\frac{\sum_{0<i\leq d}  a_{i,0}
\left(X_1^i  +X_3^i\right)}{a_{0,1}^2(X_1+X_3)},
\end{equation}
where $Z=(X_2 +X_4)/(a_{0,1}(X_1+X_3)).$ 

In what follows we will prove that the surface $\mathcal{Y}$ defined by the affine equation \eqref{Eq:Y} is absolutely irreducible. 

Note that  the surface $Z^2+Z+\frac{\sum_{0<i\leq d}  a_{i,0}
\left(X_1^i  +X_3^i\right)}{a_{0,1}^2(X_1+X_3)}=0$ is reducible only if there exists  $\overline{Z}(X_1,X_3)=\frac{\overline{Z}_1(X_1,X_3)}{\overline{Z}_2(X_1,X_3)}$, with $\overline{Z}_i(X_1,X_3)\in \overline{\mathbb{F}}_q[X_1,X_3]$, such that $(\overline{Z}(X_1,X_3))^2+\overline{Z}(X_1,X_3)+\frac{\sum_{0<i\leq d}  a_{i,0}
\left(X_1^i  +X_3^i\right)}{a_{0,1}^2(X_1+X_3)}=0$. This condition reads 
\begin{equation}\label{Eq:Z}
(\overline{Z}_1(X_1,X_3))^2+\overline{Z}_1(X_1,X_3)\overline{Z}_2(X_1,X_3)+(\overline{Z}_2(X_1,X_3))^2\frac{\sum_{0<i\leq d}  a_{i,0}
\left(X_1^i  +X_3^i\right)}{a_{0,1}^2(X_1+X_3)}=0.
\end{equation}
Now, $\deg(\overline{Z}_1)>\deg(\overline{Z}_2)$, otherwise  $2\deg(\overline{Z}_1)\leq \deg(\overline{Z}_1)+\deg(\overline{Z}_2)<2\deg(\overline{Z}_2)+d-1$ and \eqref{Eq:Z} cannot be satisfied. 

Also, the same argument shows that $2\deg(\overline{Z}_1)=2\deg(\overline{Z}_2)+d-1$ and therefore the highest homogeneous parts in $(\overline{Z}_1(X_1,X_3))^2$ and in $(\overline{Z}_2(X_1,X_3))^2\frac{\sum_{0<i\leq d}  a_{i,0}
\left(X_1^i  +X_3^i\right)}{a_{0,1}^2(X_1+X_3)}$ must be equal.

Write  $d=2^jk$, with $j\geq0$ and $k$ odd. The above argument yields that, in particular,
$$L_d(X_1,X_3)=\frac{\left(X_1^d  +X_3^d\right)}{(X_1+X_3)}=\frac{\left(X_1^k  +X_3^k\right)^{2^j}}{(X_1+X_3)},$$
the highest homogenous term in $\frac{\sum_{0<i\leq d}  a_{i,0}
\left(X_1^i  +X_3^i\right)}{a_{0,1}^2(X_1+X_3)}$,
is a square in  $\overline{\mathbb{F}}_q[X_1,X_3]$. 
\begin{itemize}
    \item If $j>0$ then $(X_1+X_3)^{2^j-1}$ is the highest power of $(X_1+X_3)$ dividing $L_d(X_1,X_3)$. Since  $2^j-1$ is odd,    $L_d(X_1,X_3)$ is not a square in $\overline{\mathbb{F}}_q[X_1,X_3]$ and the surface $\mathcal{Y}$ is irreducible. 
\item If  $j=0$, following the same argument as above, if $\mathcal{Y}$ is reducible then $k=1$, a contradiction to $d>1$.
\end{itemize}
This shows that if $d>1$ then $\mathcal{Y}$ and therefore $\mathcal{S}_f$ is absolutely irreducible or contains an absolutely irreducible component defined over $\mathbb{F}_q$. The claim follows.
\endproof

\begin{proposition}\label{Prop:Final_p_odd}
Let $\mathcal{S}_f: F(X_0,X_1,X_2,X_3,X_4)=0$, with $F$  as in \eqref{Eq:S_f},  $p>2$, and $a_{0,1}=0$.
If $d\neq p^j$, with $j\geq 0$, or $a_{i,j}=0$ for some $(i,j)\neq (d,0)$,  then $\mathcal{S}_f$ contains an absolutely irreducible component defined over $\mathbb{F}_q$.
%Then $d=p^j$ for some $j$ and $a_{i,j}=0$ for each $(i,j)\neq (d,0)$.
\end{proposition}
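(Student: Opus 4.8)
The plan is to combine the previously established propositions to reduce to a small set of monomial shapes, and then treat the genuinely new cases by an analysis of the tangent cone or of a suitable planar section, exactly as in the even-characteristic Proposition \ref{Prop:Final_p_even}. By Proposition \ref{Prop:j_bar} we may assume $\overline{j}\le 1$, i.e. no variable $Y$ appears to a power exceeding $1$ in $f$; since $a_{0,1}=0$ by hypothesis, this means every monomial of $f$ is either of the form $a_{i,0}X^i$ or of the form $a_{i,1}X^iY$ with $i>0$. First I would dispose of the mixed terms: if some $a_{i,1}\ne0$ with $i>0$, I expect to intersect $\mathcal{S}_f$ with the hyperplane $X_2=X_4$ (as in Proposition \ref{Prop:Final_p_even}) and exhibit a reduced $\mathbb{F}_q$-rational component of degree $1$ in $X_2$, whence Lemma \ref{Lemma:Aubry} produces the desired absolutely irreducible component. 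This should leave only the purely separated case $f(X,Y)=\sum_{i>0}a_{i,0}X^i$, a polynomial in $X$ alone.

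In that remaining case the equation \eqref{Eq:S_f} becomes
\[
F=(X_2-X_4)^2X_0^{d-1}+(X_1-X_3)\sum_{0<i\le d}a_{i,0}\bigl(X_1^i-X_3^i\bigr)X_0^{d-i},
\]
which is a quadratic polynomial in the single variable $Z=X_2-X_4$. The next step is to show that, \emph{unless} $f$ is a single monomial $a_{d,0}X^{d}$ with $d=p^{j}$, this quadric-in-$Z$ is absolutely irreducible (or has an absolutely irreducible $\mathbb{F}_q$-rational factor). Writing it as
\[
(X_2-X_4)^2X_0^{d-1}=-(X_1-X_3)\,G(X_0,X_1,X_3),\qquad
G=\sum_{0<i\le d}a_{i,0}\bigl(X_1^i-X_3^i\bigr)X_0^{d-i},
\]
reducibility over $\overline{\mathbb{F}}_q$ forces the right-hand side to be a perfect square in $\overline{\mathbb{F}}_q[X_0,X_1,X_3]$. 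I would compare highest-degree homogeneous parts exactly as in Proposition \ref{Prop:Final_p_even}: the leading form of $(X_1-X_3)G$ is $(X_1-X_3)\sum_{0<i\le d}a_{i,0}(X_1^i-X_3^i)X_0^{d-i}$ restricted to its top-degree piece, and being a square is a strong arithmetic constraint. The cleanest route is to note that $(X_1-X_3)$ divides the right-hand side exactly once unless the bracketed sum is itself divisible by $(X_1-X_3)$; tracking the parity of the exponent of $(X_1-X_3)$ in the square should rule out reducibility except when all but one coefficient vanish.

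The main obstacle, and the heart of the matter, is pinning down precisely when the square condition \emph{can} hold, i.e. isolating the exceptional family $f=a_{d,0}X^{p^{j}}$. When $f(X,Y)=a_{d,0}X^{d}$ is a single monomial, $G=a_{d,0}(X_1^{d}-X_3^{d})X_0^{0}$ and the surface degenerates; the factor $X_1^{d}-X_3^{d}$ splits completely, and one must check that $(X_2-X_4)^2X_0^{d-1}+a_{d,0}(X_1-X_3)(X_1^{d}-X_3^{d})$ fails to acquire an absolutely irreducible $\mathbb{F}_q$-component precisely when $d=p^{j}$ — this is where the additive (Frobenius-linear) structure of $X^{p^j}$ enters, since then $X_1^{d}-X_3^{d}=(X_1-X_3)^{d}$ and the whole polynomial becomes $(X_2-X_4)^2X_0^{d-1}+a_{d,0}(X_1-X_3)^{p^j+1}$, a genuinely irreducible-or-not borderline case that matches the Kantor/elliptic examples. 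Conversely, if $d\ne p^{j}$ then $X_1^{d}-X_3^{d}$ has a root $\zeta\ne1$ giving a reduced linear factor $X_1-\zeta X_3$ distinct from $X_1-X_3$, and a simple point on the corresponding tangent cone lets Lemma \ref{Lemma:DanYue2020} (followed by Lemma \ref{Lemma:Aubry}) extract an absolutely irreducible $\mathbb{F}_q$-rational component. I would organize the write-up around these two mutually exclusive alternatives — either a second distinct linear factor is available, or the square/parity obstruction applies — so that in every case not covered by $f=a_{d,0}X^{p^j}$ the desired component is produced.
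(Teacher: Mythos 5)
Your architecture follows the paper's for its first two steps, and those steps are fine. The reduction to $\overline{j}\le 1$ via Proposition \ref{Prop:j_bar} is exactly the paper's opening move. Your treatment of the mixed terms is a legitimate alternative to the paper's: the paper sections with $X_1=0$ and analyses when a quadratic in $X_2-1$ has square discriminant, whereas you section with $X_2=X_4$, which kills $(X_2-X_4)^2X_0^{d-1}$ and leaves $(X_1-X_3)\bigl[X_2\sum_{i>0}a_{i,1}(X_1^i-X_3^i)X_0^{d-i-1}+\sum_{i}a_{i,0}(X_1^i-X_3^i)X_0^{d-i}\bigr]$; after dividing the bracket by the gcd of its two coefficients (so that the degree-one-in-$X_2$ factor is primitive, hence absolutely irreducible, still $\mathbb{F}_q$-rational and non-repeated), Lemma \ref{Lemma:Aubry} applies. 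In the separated case you also correctly reduce everything to deciding when $L(X_1,X_3)=(X_1-X_3)\sum_i a_{i,0}\bigl(X_1^i-X_3^i\bigr)$ is a square in $\overline{\mathbb{F}}_q[X_1,X_3]$, and for $d\ne p^j$ the right obstruction is indeed a root of unity $\zeta\ne 1$: writing $d=kp^j$ with $p\nmid k$, $k>1$, the factor $X_1-\zeta X_3$ divides the leading form $a_{d,0}(X_1-X_3)(X_1^k-X_3^k)^{p^j}$ to the odd power $p^j$, so $L$ is not a square and the quadric in $Z=X_2-X_4$ is itself absolutely irreducible. (Your detour through Lemma \ref{Lemma:DanYue2020} here is unnecessary and does not work as stated, since $\zeta$ need not lie in $\mathbb{F}_q$ and that lemma requires an $\mathbb{F}_q$-rational simple point; but this is repairable, since non-squareness already finishes the case.)

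The genuine gap is the remaining case $d=p^j$ with some additional coefficient $a_{\overline{i},0}\ne 0$, $0<\overline{i}<d$; this is the heart of the paper's proof, and your ``parity of the exponent of $(X_1-X_3)$'' argument cannot handle it. Indeed, each $X_1^i-X_3^i$ is divisible by $X_1-X_3$ exactly $p^{v}$ times, where $p^{v}$ is the $p$-part of $i$, and since the homogeneous summands of $L$ have pairwise distinct degrees there is no cancellation: the exact power of $X_1-X_3$ dividing $L$ is $1+\min_i p^{v_i}$, which is \emph{always even} when $p$ is odd. So this parity never rules out anything; moreover your premise that ``$(X_1-X_3)$ divides the right-hand side exactly once unless the bracketed sum is itself divisible by $(X_1-X_3)$'' is vacuous, because the bracket is always divisible by $X_1-X_3$. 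Concretely, for $p=3$ and $f=X^9+X$ one gets
\begin{equation*}
L=(X_1-X_3)^2\bigl((X_1-X_3)^8+1\bigr),
\end{equation*}
whose $(X_1-X_3)$-exponent is even and whose leading form $(X_1-X_3)^{10}$ contains no second linear factor: neither of your two ``mutually exclusive alternatives'' produces an obstruction, yet the proposition demands one. The paper resolves exactly this case by a different and more delicate argument: writing the putative square root as a sum of homogeneous forms $(X_1-X_3)^{(p^j+1)/2}+M+\cdots$ and comparing the two highest parts forces $(X_1-X_3)^{(p^j-1)/2}\mid \bigl(X_1^{\overline{i}}-X_3^{\overline{i}}\bigr)$, hence $(p^j-1)/2\le p^{j_1}\le p^{j-1}$, i.e. $p^j\le p/(p-2)$; this leaves only $p=3$, $d=3$, which is then excluded by a direct computation. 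Nothing in your proposal plays the role of this step, so as written the proof does not go through.
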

\proof
From  Proposition \ref{Prop:j_bar}, if $\overline{j}>1$ then $\mathcal{S}_f$ contains an absolutely irreducible component defined over $\mathbb{F}_q$. Therefore we can suppose that $\overline{j}\leq1$.

Suppose that $\overline{j}=1$. Consider $\widetilde{\mathcal{S}} : F(X_4,X_1,X_2,X_3,1)=0$ defined by 
\begin{eqnarray*}
(X_2 -1)^2X_4^{d-1} + (X_1-X_3) \left[\sum_{i} a_{i,1}
\left(X_1^i X_2  -X_3^i\right)X_4^{d-i-1}\right]&&\\+(X_1-X_3) \left[\sum_{i} a_{i,0}
\left(X_1^i   -X_3^i\right)X_4^{d-i}\right]&=&0.
\end{eqnarray*}
Let $\mathcal{W}=\widetilde{\mathcal{S}} \cap (X_1=0)$ be defined by 
\begin{eqnarray*}
(X_2 -1)^2X_4^{d-1} +X_3 \left[\sum_{i>0} a_{i,1}
 X_3^iX_4^{d-i-1}\right]&&\\-a_{0,1}X_3 (X_2-1)X_4^{d-1} +X_3\left[\sum_{i} a_{i,0}
X_3^iX_4^{d-i}\right]&=&0.
\end{eqnarray*}

 The above surface is of degree two in $(X_2 -1)$ and therefore, since $p$ is odd,  it is reducible if and only if 

\begin{eqnarray*}
%\left(a_{0,1}X_3 X_4^{d-1}\right)^2+4X_4^{d-1}X_3^2\left[\sum_{i>0} a_{i,1}
% X_3^iX_4^{d-i-1}+\sum_{i} a_{i,0}
%X_3^iX_4^{d-i}\right]&=&\\
a_{0,1}^2X_3^2 X_4^{2d-2}-4X_4^{d-1}X_3\left[\sum_{i>0} a_{i,1}
 X_3^iX_4^{d-i-1}+\sum_{i} a_{i,0}
X_3^iX_4^{d-i}\right]&=&\\
X_3^2 X_4^{d-1}\left(a_{0,1}^2X_4^{d-1}-4\left[\sum_{i>0} a_{i,1}
 X_3^{i-1}X_4^{d-i-1}+\sum_{i} a_{i,0}
X_3^{i-1}X_4^{d-i}\right]\right)
\end{eqnarray*}
is  a square in $\overline{\mathbb{F}}_q[X_3,X_4]$. If $a_{i,1}\neq 0$ for some $i>0$ then  the degrees of the monomials in $G(X_3,X_4)=a_{0,1}^2X_4^{d-1}-4\left[\sum_{i>0} a_{i,1}
 X_3^{i-1}X_4^{d-i-1}+\sum_{i} a_{i,0}
X_3^{i-1}X_4^{d-i}\right]$ are $d-2$ and $d-1$ and both the values are assumed. In this case, $G(X_3,X_4)$ cannot be  a square in $\overline{\mathbb{F}}_q[X_3,X_4]$. Therefore $\mathcal{W}$ is absolutely irreducible and, by Lemma \ref{Lemma:Aubry},  $\widetilde{\mathcal{S}}$ (and therefore $\mathcal{S}$) contains an absolutely irreducible component defined over $\mathbb{F}_q$.

Suppose now that $a_{i,1}= 0$ for all $i>0$. So, $F(X_0,X_1,X_2,X_3,X_4)$ equals
$$(X_2 -X_4)^2X_0^{d-1} + (X_1-X_3) \left[\sum_{i} a_{i,0}
\left(X_1^i-X_3^i\right)X_0^{d-i}\right].$$

In this case, consider the affine part of $\mathcal{S}_f$ defined by $F(1,X_1,X_2,X_3,X_4)=0$. Note that, as already observed, $\mathcal{S}_f$ is reducible if and only if  
$$(X_1-X_3) \left[\sum_{i} a_{i,0}
\left(X_1^i-X_3^i\right)\right]$$ 
is a square in $\overline{\mathbb{F}}_q[X_3,X_4]$. 

In what follows we will show that if $L(X_3,X_4)=(X_1-X_3) \left[\sum_{i} a_{i,0}\left(X_1^i-X_3^i\right)\right]$  is  a square in $\overline{\mathbb{F}}_q[X_3,X_4]$ then there exists no $i$ with $0<i<d$ such that $a_{i,0}\neq 0$.

Suppose that $L(X_3,X_4)$ is a  square in $\overline{\mathbb{F}}_q[X_3,X_4]$. Then, the highest homogenous term $L_d(X_3,X_4)=a_{d,0}(X_1-X_3)(X_1^d-X_3^d)$  is  a square in $\overline{\mathbb{F}}_q[X_3,X_4]$ too. Write $d=kp^j$, with $p\nmid k$. Then $L_d(X_3,X_4)=a_{d,0}(X_1-X_3)\left(X_1^k-X_3^k\right)^{p^j}$. 
\begin{itemize}
    \item  If $k>1$ then there are other factors $(X_3-\eta X_4)$, $\eta^k=1$, $\eta\neq 1$, such that $(X_3-\eta X_4)^{p^j}$ is the highest power of $(X_3-\eta X_4)$ dividing $L_d(X_3,X_4)$, a contradiction to $L_d(X_3,X_4)$ being  a square. 
    \item Let now $k=1$ and $d=p^j$ for some $j\geq 0$. Suppose that there exists another $0<i<d$ such that $a_{i,0}\neq 0$ and let $\overline{i}=\max\{i : a_{i,0}\neq0, 0<i<d\}$. Write 
\begin{eqnarray*}
(X_1-X_3) \left[\sum_{i} a_{i,0} \left(X_1^i-X_3^i\right)\right]
&=&L_d(X_3,X_4)+L_{\overline{i}}(X_3,X_4)+\cdots\\
&=&\left((X_1-X_3)^{(p^j+1)/2}+M(X_3,X_4)+\cdots\right)^2,
\end{eqnarray*}
where $M(X_3,X_4)$ has degree $\overline{i}-(p^j+1)/2$. Now 
$$2(X_1-X_3)^{(p^j+1)/2}M(X_3,X_4)=L_{\overline{i}}(X_3,X_4)=a_{\overline{i},0}(X_1-X_3)(X_1^{\overline{i}}-X_3^{\overline{i}})$$
and then $(X_1-X_3)^{(p^j-1)/2}\mid (X_1^{\overline{i}}-X_3^{\overline{i}})$. Write $\overline{i}=k_1p^{j_1}$, so that $(X_1^{\overline{i}}-X_3^{\overline{i}})=(X_1^{k_1}-X_3^{k_1})^{p^{j_1}}$. Since  $(X_1-X_3)^{p^{j_1}}$ is the highest power of $(X_1-X_3)$ dividing  $(X_1^{k_1}-X_3^{k_1})^{p^{j_1}}$, we must have $(p^j-1)/2\leq p^{j_1}$. Since $j_1\leq j-1$, %   p^{j_i}\leq \overline{i}<d=p^j$, $j_1\leq j-1$. This yields 
$$p^j-1\leq 2p^{j_1}\leq 2p^{j}/p,$$
and therefore $p^j\leq \frac{p}{p-2}$. This yields $p^j=3$, that is $j=1$ and $p=3$. 
Now, $(X_1-X_3)^4+\alpha (X_1-X_3)(X_1^2-X_3^2)+\beta(X_1-X_3)^2$, $\alpha,\beta \in \mathbb{F}_q$, is a square in $\overline{\mathbb{F}_{q}}[X_1,X_3]$ only if $(X_1-X_3)^2+\alpha (X_1+X_3)+\beta$ is a square. By direct checking, this is possible only if $\alpha=\beta=0$. 
\end{itemize}
This shows that if there exists an $i$, $1<i<d$, such that $a_{i,0}\neq 0$ 
then $L(X_3,X_4)$ is not a square in $\overline{\mathbb{F}}_q[X_3,X_4]$ and therefore $\mathcal{S}_f$ is absolutely irreducible. \endproof

We are now in position to prove the main result of this paper.
\begin{theorem}\label{Th:Main2}
Let $f(X,Y)=\sum_{ij}a_{i,j}X^iY^j$, with $a_{0,1}=0$ if $p>2$. Suppose that $q>6.3 (d+1)^{13/3}$, where $\deg(f)=d$. If $\mO_4(f)$ is an ovoid of $\Q$ then one of the following holds:
\begin{enumerate}
    \item $p=2$ and $f(X,Y)=a_{1,0}X+a_{0,1}Y$. Hence $O_4(f)$ is an elliptic quadric.
    \item $p>2$ and $f(X,Y)=a_{p^j,0}X^{p^j}$, $j\geq 0$.  Hence $O_4(f)$ is either and elliptic quadric or a Kantor ovoid.
\end{enumerate}
\end{theorem}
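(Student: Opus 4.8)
The plan is to argue by contraposition, using Theorem~\ref{Th:Main} as the bridge between the geometry of the ovoid and the algebra of the surface $\mathcal{S}_f$. Since the hypothesis $q>6.3(d+1)^{13/3}$ is precisely the hypothesis of Theorem~\ref{Th:Main}, any absolutely irreducible $\mathbb{F}_q$-rational component of $\mathcal{S}_f$ would force $\mO_4(f)$ \emph{not} to be an ovoid. Hence, assuming that $\mO_4(f)$ \emph{is} an ovoid, I may conclude that $\mathcal{S}_f$ admits no absolutely irreducible component defined over $\mathbb{F}_q$, and the whole problem collapses to reading off which polynomials $f$ are compatible with this strong structural constraint. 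I will also keep in mind that $f(0,0)=0$ forces $a_{0,0}=0$, and that in odd characteristic we have already normalized $a_{0,1}=0$.

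For $p=2$ I would invoke Proposition~\ref{Prop:Final_p_even} directly: it guarantees that as soon as some coefficient $a_{i,j}$ with $(i,j)\notin\{(1,0),(0,1)\}$ is nonzero, $\mathcal{S}_f$ contains an absolutely irreducible $\mathbb{F}_q$-rational component. Combined with the contrapositive of Theorem~\ref{Th:Main}, the ovoid hypothesis forces every such coefficient to vanish, leaving $f(X,Y)=a_{1,0}X+a_{0,1}Y$, which is exactly Case~1. The analogous step for $p>2$ is Proposition~\ref{Prop:Final_p_odd}: under the normalization $a_{0,1}=0$, if either $d$ is not a power of $p$ or some coefficient $a_{i,j}$ with $(i,j)\neq(d,0)$ is nonzero, then $\mathcal{S}_f$ again contains an absolutely irreducible $\mathbb{F}_q$-component. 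The ovoid hypothesis therefore excludes both alternatives, leaving $d=p^{j}$ for some $j\geq 0$ together with $a_{i,j}=0$ for every $(i,j)\neq(p^{j},0)$; that is, $f(X,Y)=a_{p^{j},0}X^{p^{j}}$, which is Case~2.

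It then remains to match each surviving form with a named ovoid, and this is the only place where a genuine (if modest) computation enters. Substituting $f(X,Y)=a_{p^{j},0}X^{p^{j}}$ into the ovoid criterion \eqref{Eq:intro} and writing $u=x_1-x_2$, $v=y_1-y_2$, the Frobenius identity $(x_1-x_2)(x_1^{p^{j}}-x_2^{p^{j}})=u^{p^{j}+1}$ turns the condition into $v^{2}+a_{p^{j},0}\,u^{p^{j}+1}\neq0$ for all $(u,v)\neq(0,0)$, which holds precisely when $-a_{p^{j},0}$ is a nonsquare of $\mathbb{F}_q$; for $j=0$ this is the elliptic quadric $f=-nx$ and for $j>0$ it is the Kantor ovoid $f=-nx^{\sigma}$ with $\sigma=p^{j}$, in agreement with Table~\ref{Table1}. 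The same substitution in characteristic two turns \eqref{Eq:intro} into the requirement that the binary quadratic form $v^{2}+a_{1,0}u^{2}+a_{0,1}uv$ be anisotropic (so in particular $a_{0,1}\neq0$), which identifies $\mO_4(f)$ with the even-$q$ elliptic quadric $f=ax+y$.

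The main obstacle, notably, does \emph{not} lie in this final synthesis. All of the real work has already been absorbed into Propositions~\ref{Prop:Final_p_even} and~\ref{Prop:Final_p_odd}, and into the tangent-cone and square-freeness analysis supporting them via Lemmas~\ref{Lemma:Aubry} and~\ref{Lemma:DanYue2020}, together with the Cafure--Matera point count of Theorem~\ref{Th:CafureMatera} underpinning Theorem~\ref{Th:Main}. Once those are in hand, the theorem itself follows as a clean case split on $p=2$ versus $p>2$, and I would expect the only subtlety requiring care to be the bookkeeping ensuring that the coefficient conditions excluded by the propositions are exactly the complement of the two admissible normal forms, so that the case analysis is genuinely exhaustive.
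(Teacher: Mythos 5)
Your proposal is correct and follows essentially the same route as the paper: it combines Propositions \ref{Prop:Final_p_even} and \ref{Prop:Final_p_odd} with (the contrapositive of) Theorem \ref{Th:Main} to force the two normal forms, and then applies Condition \eqref{Eq:intro} to identify them with the elliptic quadric and Kantor ovoids. Your closing anisotropy computation simply makes explicit the step the paper compresses into ``the claim then follows by applying Condition \eqref{Eq:intro}.''
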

\proof
By combining Propositions \ref{Prop:Final_p_even} and \ref{Prop:Final_p_odd} with Theorem \ref{Th:Main} we obtain that either $p=2$ and  $f(X,Y)=a_{1,0}X+a_{0,1}Y$, or $p>2$ and $f(X,Y)=a_{p^j,0}X^{p^j}$, $j\geq 0$. The claim then follows by applying Condition \eqref{Eq:intro}. 
\endproof

Note that the ovoids corresponding to the elliptic quadric  
$$f_1(x,y)=ax+y, \qquad tr(a)= 1,$$
and the Tits ovoid
$$f_2(x,y)=x^{2^{h+1}+1}+ y^{2^{h+1}}, \qquad q=2^{2h+1}.$$
yield the following equation for  $\mathcal{S}_f$ 

\begin{equation}\label{Eq:F_1}
(X_2 +X_4)^2 + (X_1+X_3) (aX_1+X_2+aX_3+X_4)=0
\end{equation}
and 
\begin{equation}\label{Eq:F_2}
(X_2 +X_4)^2X_0^{2^{h+1}}+ (X_1+X_3) (X_1^{2^{h+1}+1}+X_2^{2^{h+1}}X_0+X_3^{2^{h+1}+1}+X_4^{2^{h+1}}X_0)=0.
\end{equation}
In particular,  \eqref{Eq:F_1}  defines a reducible quadric splitting into two linear factors defined over $\mathbb{F}_{q^2}$. It has been already observed that  Main Theorem  does not apply to $f_2(x,y)$, since $\deg(f_2)\simeq  \sqrt{q}$. In this case, it is possible to prove that $\mathcal{S}_f$ is absolutely irreducible.

%%%%%%%%%%%%%%%%%%%%%%%%%%%%%%%%%%%%%%%%%%%%%%
\section{Two classes of permutation polynomials in characteristic three}\label{Sec:permPol}
As a by-product of our investigation on ovoids of $Q(4,q)$, we provide in this section two families of permutation polynomials in characteristic three. A polynomial $f(x)\in  \mathbb{F}_q[x]$ is a \emph{permutation polynomial} (PP) if it is a bijection of the finite field $\mathbb{F}_q$ into itself. Permutation polynomials were first studied by Hermite and Dickson; see \cite{MR1502214,hermite1863fonctions}.Particular, simple structures, or additional extraordinary properties are usually required by applications of PPs in other areas of mathematics and engineering, such as cryptography, coding theory, or combinatorial designs. In this case, permutation polynomials meeting these criteria are usually difficult to find. For a deeper introduction to the connections of PPs with other fields of mathematics we refer to \cite{MR3293406,MR3087321,wang201915,MR2494389} and the references therein.

In this section we will assume that $f(x,y) = \sum_i a_i x^{p^i} + \sum_j b_j y^{p^i}$ is the sum of two $\fp$-linear functions the first  one only in $x$ and the other one only in $y$.
Note that all known examples of ovoids $\mO_4(f)$,  apart from the Tits and Ree-Tits slice, arise from functions $f(x,y)$ of this type.
We may assume that there exist $\overline{i}$ such that $a_{\overline{i}} \ne 0$ and $\overline{j}$ such that
$b_{\overline{j}} \ne 0$.
By using the same notation as in previous section we have that in this case the hypersurface  $\mathcal{S}_f$
of  $ \mathrm{PG}(4,q)$ has equation  $F(X_0,X_1,X_2,X_3,X_4)=0$, where $F(X_0,X_1,X_2,X_3,X_4)$ equals
$$
X_0^{d-2}(X_2 -X_4)^2 + (X_1-X_3) \left[\sum_{i} a_{i}
 X_0^{d-1-p^i}(X_1-X_3)^{p^i} + \sum_{j} b_{j}X_0^{d-1-p^j}(X_2-X_4)^{p^j}  \right].
$$
Put $X_0=1$, $L:=X_2-X_4$ and  $M:=X_1-X_3$.  {Then $F(X_0,X_1,X_2,X_3,X_4)=0$ reads}  
\begin{equation}{\label{LM}}
 L^2+\sum_i a_i M^{p^i+1} + M\sum_j b_j L^{p^j} =0.
\end{equation}
\\
Every examples with  $f(x,y) = \sum_i a_i x^{p^i} + \sum_j b_j y^{p^i}$   gives an equation  in $L$ and $M$ of this type
that has solution if and only if $L=M=0$.
\\
Fixing a basis $\{\xi,\xi^p,\xi^{p^2},\ldots,\xi^{p^{h-1}} \}$ of $\fq$ over $\fp$, every element $a\in\fq$ can be written as $a=a_0\xi+a_1 \xi^p +a_2 \xi^{p^2}+\ldots + a_{h-1}\xi^{p^{h-1}}$ for some $a_i\in \fp$.
Moreover, by putting $L_i:=L^{p^i}$ and $M_i:=M^{p^i}$,  Equation \eqref{LM} reads
\[ L_0^2 + M_0\left(\sum_i a_i M_i + \sum_j b_j L_j\right) =0. \] 
In the projective space $\PG(2h-1,q)$ with homogeneous coordinates \\ $(L_0,L_1,\ldots, L_{h}, M_0,M_1,\ldots, M_{h})$
the previous equation gives a quadric that  is always a cone with vertex
a $(2h-1)$-dimensional subspace $V$ given by:
\begin{equation}\label{V} V: L_0=M_0= \sum_i a_i M_i + \sum_j b_j L_j =0\end{equation}
and projecting a non-degenerate conic  $\Gamma : L_0^2+a_{\overline{i}} M_0M_{\overline{i}} = 0$ in a plane $\pi$ given by:
\begin{equation}\label{gamma} \pi: L_j=0, j=1,\ldots,h-1, M_i=0, i=1, \ldots h-1 \mbox{ with } i\ne \overline{i}.
\end{equation}
Every example of ovoid of $Q(4,q)$ gives a  corresponding cone that is skew with the subgeometry  $\Sigma=\PG(2h-1,p)$ of
$\PG(2h-1,q)$ given by
\begin{equation}\label{Sigma} \Sigma = \{(a,a^p,\ldots,a^{p^{h-1}},b,b^p,\ldots,b^{p^{h-1}})\}_{a,b \in \fq} \mbox{ with  } (a,b) \ne (0,0).\end{equation}
The condition that the cone is disjoint from the subgeometry $\Sigma$ reads as follows:
The following equation in $x$ should have no solution unless $x=0$ for every $(l_0,m_0)\ne (0,0)$
\[  \sum_{j} b_j m_0 \ell_0^{p^j} x^{p^j} +
\sum_{i} a_i m_0^{p^i+1}x^{p^i} + \ell_0^2 x  =0.\]

From the previous arguments we obtain the following

\begin{proposition}
There exists an ovoid of $Q(4,q)$, $q=p^{h}$, as in \eqref{Eq:Param} with $f(x,y)= \sum_i a_i x^{p^i} + \sum_j b_j y^{p^i}$ if and only if in $\PG(2h-1,q)$ there exists a quadratic cone with vertex a $(2h-4)$-dimensional subspace $V$ as in \eqref{V} and base a non-degenerate conic $\Gamma$ as in \eqref{gamma} skew with the subgeometry $\Sigma\cong \PG(2h-1,p)$ as in \eqref{Sigma}.
\end{proposition}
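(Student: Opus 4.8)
The plan is to show that the ovoid condition \eqref{Eq:intro}, specialised to $f(x,y)=\sum_i a_i x^{p^i}+\sum_j b_j y^{p^j}$, is \emph{verbatim} the condition that a certain quadric cone in $\PG(2h-1,q)$ avoids the subgeometry $\Sigma$ of \eqref{Sigma}. Thus the whole equivalence reduces to two tasks: a computation turning the combinatorial ovoid condition into the polynomial identity \eqref{LM}, and a reading of that identity as a geometric incidence, together with a structural description of the quadric.

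First I would linearise the ovoid condition. Because each summand of $f$ is $\fp$-linear and the Frobenius is additive, for $(x_1,y_1)\ne(x_2,y_2)$ one has $f(x_1,y_1)-f(x_2,y_2)=\sum_i a_i M^{p^i}+\sum_j b_j L^{p^j}$, where $M=x_1-x_2$ and $L=y_1-y_2$. Substituting into \eqref{Eq:intro} shows that $\mO_4(f)$ is an ovoid if and only if the polynomial of \eqref{LM}, namely $L^2+\sum_i a_i M^{p^i+1}+M\sum_j b_j L^{p^j}$, vanishes on $\fq^2$ only at $(L,M)=(0,0)$; the decisive point is that as the distinct pairs $(x_1,y_1),(x_2,y_2)$ range over $\fq^2$, the difference $(M,L)$ ranges over all of $\fq^2\setminus\{(0,0)\}$, so the condition sees $f$ only through these two linear summands. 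Next I would set up the geometric dictionary: introducing the Frobenius coordinates $L_i=L^{p^i}$, $M_i=M^{p^i}$ embeds every nonzero $(L,M)\in\fq^2$ as a point of $\Sigma\cong\PG(2h-1,p)$, and evaluating the quadratic form $Q=L_0^2+M_0\big(\sum_i a_iM_i+\sum_j b_jL_j\big)$ at such a point returns exactly the left-hand side of \eqref{LM}. Hence the quadric $Q=0$ meets $\Sigma$ precisely at the images of the nonzero solutions of \eqref{LM}, and the two implications of the biconditional follow at once from the previous step.

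It then remains to justify the \emph{structure} of $Q=0$ asserted in the statement, and this is where the real work lies. Writing $\ell=\sum_i a_iM_i+\sum_j b_jL_j$, the form $Q=L_0^2+M_0\ell$ depends only on the three linear forms $L_0,M_0,\ell$; since $a_{\overline{i}}\ne 0$ introduces a coordinate $M_{\overline{i}}$ distinct from $L_0,M_0$, these three forms are linearly independent, so $Q=0$ is a cone whose vertex is the $(2h-4)$-dimensional subspace $V\colon L_0=M_0=\ell=0$ of \eqref{V}. I would then verify that the plane $\pi$ of \eqref{gamma} is complementary to $V$: the dimensions add to $2h-2=(2h-1)-1$, and $\pi\cap V=\emptyset$ because on $\pi$ the equation $\ell=0$ together with $L_0=M_0=0$ forces $M_{\overline{i}}=0$. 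Consequently the base of the cone is the plane section $\Gamma=(Q=0)\cap\pi$.

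The main obstacle is then to check that $\Gamma$ is a \emph{non-degenerate} conic, in both parities. Restricting $Q$ to $\pi$ yields the ternary quadratic form $L_0^2+a_0M_0^2+b_0L_0M_0+a_{\overline{i}}M_0M_{\overline{i}}$ in $L_0,M_0,M_{\overline{i}}$, whose associated matrix has determinant a nonzero multiple of $a_{\overline{i}}^2$, hence rank $3$; so for $p>2$ the section is a non-degenerate conic, projectively equivalent after completing squares to $L_0^2+a_{\overline{i}}M_0M_{\overline{i}}=0$ as in \eqref{gamma}. For $p=2$ the same form is irreducible with nucleus off $\pi$, so $\Gamma$ is again a non-degenerate conic. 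Collecting these facts shows that every $f$ of the given type produces exactly the cone described in the statement, and since by the first two paragraphs $\mO_4(f)$ is an ovoid if and only if this cone is skew with $\Sigma$, the equivalence of the two existence statements follows.
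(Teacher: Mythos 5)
Your argument is correct and is essentially the paper's own: the paper likewise passes from condition \eqref{Eq:intro} to the identity \eqref{LM} using the additivity of $f$, reads \eqref{LM} through the Frobenius coordinates $L_i,M_i$ as the quadric $L_0^2+M_0\left(\sum_i a_iM_i+\sum_j b_jL_j\right)=0$ in $\PG(2h-1,q)$, identifies this quadric as a cone with vertex \eqref{V} and base conic \eqref{gamma}, and observes that the ovoid condition is precisely disjointness of this cone from the subgeometry $\Sigma$ of \eqref{Sigma}. The only difference is that you spell out the structural claims (linear independence of the three forms $L_0,M_0,\ell$, complementarity of $\pi$ and $V$, and non-degeneracy of the base conic in both characteristics) which the paper asserts without proof.
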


 Finally, as a byproduct of the above proposition, we obtain two families of $\ft$-linearized  permutation polynomials from the  Pentilla-William ovoid of $Q(4,3^5)$ and  from the Thas-Payne ovoids of $Q(4,3^h)$, $h>2.$ 

\begin{proposition}
Let $q=3^5$, for every $(\ell_0,m_0)\in \fq^2\setminus \{(0,0)\}$ the following are $\ft$-linearized permutation polynomials:
\[ m_0\ell_0^{81} x^{81}+m_0^{10} x^{9}   -\ell_0^2 x =0.\]
\end{proposition}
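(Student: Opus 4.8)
The plan is to deduce the statement almost immediately from the preceding Proposition, using only the known existence of the Penttila--Williams ovoid together with the elementary fact that an $\ft$-linearized map is a permutation exactly when its kernel is trivial. First I would recall from Table \ref{Table1} that the Penttila--Williams ovoid of $Q(4,3^5)$ does exist and arises from $f(x,y)=-x^9-y^{81}$. Rewriting this $f$ in the shape $f(x,y)=\sum_i a_i x^{p^i}+\sum_j b_j y^{p^j}$ used throughout this section, and noting $9=3^2$ and $81=3^4$, the only nonzero coefficients are $a_2=-1$ and $b_4=-1$ (so $\overline i=2$ and $\overline j=4$).

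Next I would invoke the preceding Proposition: since $\mO_4(f)$ is an ovoid of $\Q$, the associated quadratic cone is skew with the subgeometry $\Sigma\cong\PG(9,3)$. By the disjointness condition established just above that Proposition, this is equivalent to the assertion that
\[
\sum_j b_j m_0\ell_0^{p^j}x^{p^j}+\sum_i a_i m_0^{p^i+1}x^{p^i}+\ell_0^2 x=0
\]
has $x=0$ as its only solution for every $(\ell_0,m_0)\in\fq^2\setminus\{(0,0)\}$. Specializing $p=3$ and $a_2=b_4=-1$ collapses the two sums to the single terms $-m_0\ell_0^{81}x^{81}$ and $-m_0^{10}x^{9}$, so after multiplying by $-1$ the condition reads precisely that the polynomial $P(x)=m_0\ell_0^{81}x^{81}+m_0^{10}x^{9}-\ell_0^2 x$ vanishes only at $x=0$, for every such $(\ell_0,m_0)$.

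Finally I would observe that every exponent of $P$ is a power of $3$, so $P$ is an $\ft$-linearized endomorphism of the $5$-dimensional $\ft$-vector space $\fq=\mathbb{F}_{3^5}$. For such a linear endomorphism, having trivial kernel is equivalent to being bijective, whence $P$ is a permutation polynomial of $\fq$, as claimed. The only point requiring care is the bookkeeping in the specialization: one must check that the general ``disjointness'' polynomial of the preceding Proposition, obtained via the substitution $(a,b)=(\ell_0 x,m_0 x)$, reduces to exactly the stated $P$ once the correct indices $\overline i=2$ and $\overline j=4$ are inserted. No genuine computation or root-counting is needed, since the existence of the Penttila--Williams ovoid supplies the crucial nonvanishing statement for free; the entire content of the proof is the translation ``ovoid exists $\Rightarrow$ cone skew to $\Sigma$ $\Rightarrow$ trivial kernel $\Rightarrow$ permutation.''
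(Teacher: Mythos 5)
Your proposal is correct and is precisely the argument the paper intends: the paper states these permutation polynomials as an immediate byproduct of the preceding Proposition, i.e.\ the existence of the Penttila--Williams ovoid forces the cone to be skew to $\Sigma$, which is exactly the statement that the displayed $\ft$-linearized polynomial has trivial kernel, hence is a permutation. Your bookkeeping (identifying $a_2=b_4=-1$, so the terms become $m_0\ell_0^{3^4}x^{3^4}$, $m_0^{3^2+1}x^{3^2}$ and $\ell_0^2x$ up to an irrelevant global sign, via the substitution $(a,b)=(\ell_0 x,m_0 x)$) is accurate and matches the paper's derivation of the disjointness condition.
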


%\noindent from the Pentilla-William ovoid of $Q(4,3^5)$ and the following class of $\ft$-linearized permutation polynomials 

\begin{proposition}
Let $q=3^n$, $n>2$ and let $m$ be a fixed non-square in $\fq$.  For every $(\ell_0,m_0)\in \fq^2\setminus \{(0,0)\}$ the following are $\ft$-linearized permutation polynomials:
\[(m m_0^2 - \ell_0^2)^9 x^9  + m_0^9\ell_0^{3} x^{3}+ (m^{-1})m_0^{10} x. \]
\end{proposition}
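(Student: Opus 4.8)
The plan is to apply the preceding Proposition, which reduces the existence of the Thas--Payne ovoid of $Q(4,3^n)$ to the condition that a certain quadratic cone is skew with the subgeometry $\Sigma \cong \PG(2n-1,p)$. For the Thas--Payne family the relevant function is $f(x,y)=-m x-(m^{-1}x)^{1/9}-y^{1/3}$ with $m$ a non-square; since $Q(4,q)$ is indeed an ovoid (this is a known example, listed in Table~\ref{Table1}), the corresponding cone is skew with $\Sigma$. By the final displayed condition in the Proposition, this skewness is equivalent to the statement that, for every $(\ell_0,m_0)\neq(0,0)$, the associated $\ft$-linearized polynomial has no nonzero root, i.e.\ it is a permutation polynomial of $\fq$. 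So the strategy is: write down the coefficients $a_i,b_j$ of $f$, substitute them into the displayed equation $\sum_j b_j m_0 \ell_0^{p^j}x^{p^j}+\sum_i a_i m_0^{p^i+1}x^{p^i}+\ell_0^2 x=0$, and simplify.

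First I would read off the exponents: writing $f$ as a sum of $\ft$-linearized monomials in $x$ and in $y$, the $x$-part is $-m x - (m^{-1})^{1/9}x^{1/9}$ and the $y$-part is $-y^{1/3}$. Because $\PP$-linearized coefficients can be re-indexed by raising the whole identity to a suitable power of $3$ (the map $t\mapsto t^{3}$ is an automorphism of $\fq$ and carries the no-root condition to an equivalent one), I would normalize the fractional exponents $1/9=3^{n-2}$ and $1/3=3^{n-1}$ into honest powers of $3$ by applying Frobenius, thereby obtaining the clean exponents $9,3,1$ displayed in the statement. Carrying the coefficients through this normalization is where the precise constants $mm_0^2-\ell_0^2$, $m_0^9\ell_0^3$, and $m^{-1}m_0^{10}$ should emerge; in particular the combined $x^9$-coefficient collects contributions from both the $a_i m_0^{p^i+1}$ term and the $\ell_0^2 x$ term after the Frobenius twist, which is the source of the factor $(mm_0^2-\ell_0^2)^9$.

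The main obstacle, and the only genuinely delicate point, is the bookkeeping of exponents and coefficients under the Frobenius normalization: one must verify that after raising the no-root condition to the correct power of $3$ the three surviving monomials have exactly the exponents $9,3,1$ and the stated coefficients, and that no term is lost or double-counted when the $x$- and $y$-contributions are merged. Once this is done, the conclusion is immediate: since the Thas--Payne ovoid exists for all $q=3^n$ with $n>2$, the cone is skew with $\Sigma$ for every such $q$, so by the Proposition the displayed polynomial has only the trivial root $x=0$ for each $(\ell_0,m_0)\neq(0,0)$, which is precisely the assertion that it is a permutation polynomial. The argument for the $q=3^5$ Penttila--Williams case is entirely analogous, starting from $f_6(x,y)=-x^9-y^{81}$ and yielding the exponents $81,9,1$.
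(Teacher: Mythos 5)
Your proposal is correct and follows essentially the same route as the paper, which states this proposition as an immediate byproduct of the preceding one: since the Thas--Payne ovoid with $f(x,y)=-mx-(m^{-1}x)^{1/9}-y^{1/3}$ exists for $q=3^n$, $n>2$, the skewness condition says the associated $\ft$-linearized polynomial has only the trivial root, and raising it to the $9$th power (Frobenius twist) to convert the exponents $1/9,1/3$ into $1,3,9$ produces exactly the stated coefficients $(mm_0^2-\ell_0^2)^9$, $m_0^9\ell_0^3$, $m^{-1}m_0^{10}$, with the $x^9$-coefficient indeed arising from merging the $a_0m_0^2x$ and $\ell_0^2x$ terms as you indicate. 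The bookkeeping you defer does check out, and an additive polynomial with trivial kernel on a finite field is automatically a bijection, so your argument is complete and matches the paper's.
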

%\[m m_0^2 - \ell_0^2) x + (m^{-1})^{3^{n-2}}m_0^{3^{n-2}+1} x^{3^{n-2}} + m_0\ell_0^{3^{n-1}} x^{3^{n-1}} =0\]
\noindent 

\section*{Acknowledgments}
The research of both Daniele Bartoli and Nicola Durante was supported by the Italian National Group for Algebraic and Geometric Structures and their Applications (GNSAGA - INdAM). We thank the anonymous reviewers for their careful reading of our manuscript and their  comments and suggestions.

\end{document}